\newtheorem{theorem}{Theorem}[section]
\newtheorem{lemma}{Lemma}[section]
\newtheorem{corollary}{Corollary}[section]
\newtheorem{definition}{Definition}[section]
\newtheorem{proposition}{Proposition}[section]
\newtheorem{remark}{Remark}[section]
\begin{document}
\title
{A Tight Nonlinear Approximation Theory 
for Time Dependent Closed Quantum
Systems}
\author{Joseph W. Jerome\footnotemark[1]}
\date{}
\maketitle
\pagestyle{myheadings}
\markright{Approximation theory for closed quantum systems}
\medskip

\vfill
{\parindent 0cm}

{\bf 2010 AMS classification numbers:} 
35Q41; 47D08; 47H09; 47J25; 81Q05.

\bigskip
{\bf Key words:} 
Time dependent quantum systems; time-ordered evolution operators;
numerical fixed points; Faedo-Galerkin approximations. 

\fnsymbol{footnote}
\footnotetext[1]{Department of Mathematics, Northwestern University,
                Evanston, IL 60208. \\In residence:
George Washington University,
Washington, D.C. 20052}

\begin{abstract}
The approximation of fixed points by numerical fixed points was 
presented in the elegant monograph of Krasnosel'skii et al. (1972).
The theory, both in its formulation and implementation, requires a
differential operator calculus, so that its actual application has been
selective. The writer and Kerkhoven demonstrated this for the semiconductor
drift-diffusion model in 1991. In this article, we show that the theory
can be applied to   
time dependent quantum systems on bounded domains, via 
evolution operators. In addition to the kinetic operator term,  
the Hamiltonian includes 
both an external time
dependent potential and the classical nonlinear Hartree potential. 
Our result can be paraphrased a follows: For a sequence of Galerkin
subspaces, and the Hamiltonian just described, a uniquely defined sequence
of Faedo-Galerkin solutions exists; it converges in Sobolev space,
uniformly in time,  
at the maximal rate given by the projection operators. 
\end{abstract}

\section{Introduction.}
\label{Introduction}
The topic of this article is the convergence analysis for the
Faedo-Galerkin method, as applied to a model for time dependent
density functional theory (TDDFT). We consider closed quantum systems on bounded
domains, with potentials described by a time varying external potential
and by the Hartree potential. Exchange correlation potentials, for which
there is no consensus in the physics community, are not included.  
A systematic approximation theory is used
for the convergence analysis, which is spatially uniform over a specified time
interval. 

An operator calculus for approximation of 
the fixed points of nonlinear mappings
by numerical fixed points, in general Banach spaces, was developed by
Krasnosel'skii and his coworkers \cite{KrasVain}, 
and is intended
for nonlinear equations and systems. 
It was applied to 
the semiconductor model in \cite{JerKerk2} (see also \cite{Springer}) 
by the analysis of implicit fixed point mappings, obtained by Gummel
decomposition (iteration).
It was shown by the author in \cite{Farouk}
that the inf-sup theory \cite{BS94}, used for stability/convergence in finite
element applications to linear problems, is logically implied
by the Krasnosel'skii theory.
In section \ref{Kras}, particularly Theorem \ref{ther2}, 
we shall summarize the theory of \cite{KrasVain}
as it applies here. 

TDDFT was introduced into the physics community in \cite{RG}. A readable
account of the subject may be found in \cite{U}. 
It has emerged as a significant tool in application fields \cite{tddft},
especially those which study the physical properties of 
subatomic particles, since it is formulated to track electronic charge
exactly. The TDDFT model
is to be distinguished
from the nonlinear Schr\"{o}dinger equation \cite{Caz,CH}, much studied in
the mathematical community, by the presence of time dependent potentials
in the Hamiltonian. From the perspective of mathematical analysis, this
means that evolution operators, not semigroups, are the preferred tool
(for a development of these operators, due originally to Kato, see
\cite{K1,K2,J2}). In
fact, the evolution operators, together with fixed point analysis, were
used in \cite{JP,J1}, to establish strong and weak solutions, resp.
They were shown to be an effective computational tool in \cite{CP}.
In this article, we coordinate the 
explicit fixed point mappings, induced by the
evolution operators, with the operators of the Krasnosel'skii calculus.
We demonstrate the following (Th.\ \ref{mainresult}):
For a sequence of Galerkin
subspaces of $H^{1}_{0}$, with dense union, a uniquely defined sequence
of Faedo-Galerkin solutions exists for the system, 
and converges in $H^{1}_{0}$,
uniformly in time, to the unique solution $\Psi$ of the quantum system. 
The order of convergence is the same as that
given by the orthogonal projection operators, as applied
to both the solution and the initial datum. Also, $\Psi$ is the unique
fixed point of an operator $K$ defined by the evolution operator. The
principal assumption of the theory may be interpreted as a regularization   
property of the linear operator $K^{\prime}(\Psi)$.

\section{The Krasnosel'skii Calculus}
\label {Kras}

Given a fixed point
$x_{0}$ of a smooth mapping ${T}$, a numerical approximation map 
${T}_n$, with numerical fixed point $x_{n}$,
and a linear projection map ${P}_n$, a theory is constructed to estimate 
$\|x_n - {P}_n x_0\|$. 
The result 
is stated as Theorem \ref{ther2} below.
We now discuss the basic results of the theory.
 
\subsection{The abstract calculus}
Let $E$ be a Banach space and ${T}$ a mapping from an open subset $\Omega_{0}$
of $E$ into $E$.  We assume the existence of a fixed point $x_{0}$ for ${T}$:
\begin{equation}
{T}x_{0} = x_{0}.    \label{fp}
\end{equation}
If $\{E_{n}\}$ denotes a sequence of closed subspaces of $E$, 
suppose that ${T}_{n}: \Omega_{n} \mapsto E_{n},$ 
$\Omega_{n} := \Omega_{0} \cap E_{n}$, has a fixed point: 
\begin{equation}
{T}_{n}x_{n} = x_{n}.      \label{discfp}
\end{equation}
Finally, let $\{{P}_{n}\}$ be a family of 
bounded linear projections from $E$ onto $E_{n}$:
\begin{itemize}
\item 
$P_{n}^{2} = P_{n}$,
\item
$P_{n}E = E_{n}$.
\end{itemize}

We have the following \cite[Th.\ 19.1]{KrasVain}.
\begin{theorem} \label{ther2}
Let the operators ${T}$ and ${P}_{n}{T}$ be
Fr\'echet-differentiable in $\Omega_{0}$, 
and ${T}_{n}$ Fr\'echet-differentiable in $\Omega_{n}$. Assume that
{\rm (\ref{fp})} 
has a solution $x_{0} \in \Omega_{0}$ and the linear operator  
${I}  - {T}'(x_{0})$
is continuously invertible in $E$. Let
\begin{equation}
\|{P}_{n} x_{0} - x_{0}\| \rightarrow 0,
\label{projcon}
\end{equation}
\begin{equation}
\|{P}_{n}{T}{P}_{n}x_{0} - {T}x_{0}\| \rightarrow 0, 
\label{projTcon}
\end{equation}
\begin{equation}
\|{P}_{n}{T}'({P}_{n}x_{0}) - {T}'(x_{0})\| \rightarrow 0,  
\label{firstder}
\end{equation}
\begin{equation}
\|[{T}_{n} - {P}_{n}{T}]{P}_{n}x_{0}\| \rightarrow 0, 
\label{projTncon}
\end{equation}
\begin{equation}
\|[{T}'_{n} - ({P}_{n}{T})']({P}_{n}x_{0})\| \rightarrow 0, 
\label{secondder}
\end{equation}
as $n \rightarrow \infty$. Finally, assume that for any $\epsilon > 0$
there exist $n_{\epsilon}$ and $\delta_{\epsilon} > 0$ such that 
\begin{equation}
\| {T}^{'}_{n}(x) - {T}^{'}_{n}({P}_{n}x_{0}) \| \leq \epsilon
\;\;\; \mbox{for} 
\;\;\; (n \geq n_{\epsilon}; \; \|x - {P}_{n}x_{0}\| \leq
\delta_{\epsilon},  \; x \in \Omega_{n}).  \label{conderTn}
\end{equation}
Then there exist $n_{0}$ and $\delta_{0} > 0$ such that, when $n \geq n_{0}$,
equation {\rm (\ref{discfp})} has a unique solution $x_{n}$ in the ball 
$\|x - x_{0}\| \leq \delta_{0}$. Moreover,
\begin{equation}
\|x_{n} - x_{0}\| \leq \| [{I} - {P}_{n}]x_{0}\| + \|x_{n} - {
P}_{n} x_{0}\| 
\rightarrow 0 \;\;\; {\rm as} \;\;\; n \rightarrow \infty, \label{xcon}
\end{equation}
and $\|x_{n} - {P}_{n}x_{0}\|$ satisfies the following two-sided estimate
$(c_{1}, c_{2} > 0)${\rm :}
\begin{equation}
c_{1} \|{P}_{n}{T}x_{0} - {T}_{n}{P}_{n}x_{0}\| \leq
\|x_{n} - {P}_{n}x_{0}\| \leq 
c_{2} \|{P}_{n}{T}x_{0} - {T}_{n}{P}_{n}x_{0}\|.  \label{xbnd}
\end{equation}
\end{theorem}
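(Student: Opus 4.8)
The plan is to recast the discrete equation (\ref{discfp}), $T_n x = x$, as a genuine contraction on a small closed ball of $E_n$ centered at $P_n x_0$, with both the radius and the contraction constant controlled uniformly in $n$. Setting $y = x - P_n x_0$ and inserting the first-order expansion of $T_n$ about $P_n x_0$, the equation $T_n x = x$ becomes equivalent to
\[ y = [I - T_n'(P_n x_0)]^{-1}\bigl( [T_n(P_n x_0) - P_n x_0] + R_n(y + P_n x_0) \bigr) =: G_n(y), \]
where $R_n(x) = T_n x - T_n(P_n x_0) - T_n'(P_n x_0)(x - P_n x_0)$ is the Taylor remainder. The mean value inequality, applied on the segment $[x',x]$ and combined with the equicontinuity hypothesis (\ref{conderTn}), gives $\|R_n(x) - R_n(x')\| \le \epsilon\|x-x'\|$ whenever $x,x'$ lie in the $\delta_\epsilon$-ball about $P_n x_0$ and $n \ge n_\epsilon$; in particular $\|R_n(x)\| \le \epsilon\|x - P_n x_0\|$. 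No continuity of $T_n'$ is needed for this — only the differentiability assumed — since the mean value inequality holds in the form $\|T_n x - T_n x' - L(x-x')\| \le \sup_{s\in[0,1]}\|T_n'(x'+s(x-x'))-L\|\,\|x-x'\|$.

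The key step — and the one I expect to be the main obstacle — is to show that $I - T_n'(P_n x_0)$ is invertible on $E_n$ for all large $n$, with a bound on the inverse independent of $n$. Because $P_n$ is linear, $(P_n T)'(P_n x_0) = P_n T'(P_n x_0)$, so (\ref{firstder}) says $I - P_n T'(P_n x_0) \to I - T'(x_0)$ in the operator norm on $E$; since $I - T'(x_0)$ is continuously invertible, the Banach perturbation lemma makes $I - P_n T'(P_n x_0)$ invertible on $E$ for $n$ large, with uniformly bounded inverse. One then checks that this operator maps $E_n$ bijectively onto $E_n$ — here I would use that its range for arguments in $E_n$ sits in $E_n$, together with $P_n|_{E_n} = I$, to see that preimages of $E_n$-elements stay in $E_n$ — so it restricts to an invertible operator on $E_n$ with the same bound. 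A final application of the perturbation lemma, using that (\ref{secondder}) makes $T_n'(P_n x_0) - P_n T'(P_n x_0)$ small in norm on $E_n$, yields $\|[I - T_n'(P_n x_0)]^{-1}\| \le M$ for $n \ge n_1$ with $M$ independent of $n$; incidentally this bounds $\|T_n'(P_n x_0)\| \le C$ uniformly as well.

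With $M$ in hand, fix $\epsilon$ so that $M\epsilon \le \tfrac12$, and take $\delta_0 \le \delta_\epsilon$ small enough that the closed $\delta_0$-ball about $P_n x_0$ lies in $\Omega_n$ for all large $n$ (possible since $P_n x_0 \to x_0 \in \Omega_0$ by (\ref{projcon}) and $\Omega_0$ is open). Then $\|G_n(y) - G_n(y')\| \le M\epsilon\|y-y'\| \le \tfrac12\|y-y'\|$. For the self-map property I would estimate $\|G_n(0)\| \le M\|T_n(P_n x_0) - P_n x_0\|$ and show the right side tends to $0$ by the bound $\|T_n(P_n x_0) - P_n x_0\| \le \|[T_n - P_nT]P_n x_0\| + \|P_n T(P_n x_0) - x_0\| + \|x_0 - P_n x_0\|$, invoking (\ref{projTncon}), (\ref{projTcon}) (with $Tx_0 = x_0$ from (\ref{fp})), and (\ref{projcon}); hence $\|G_n(0)\| \le \delta_0/2$ and $G_n$ maps the $\delta_0$-ball into itself for $n \ge n_0$. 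Banach's fixed point theorem then gives a unique $y_n$ with $y_n = G_n(y_n)$, i.e. a unique solution $x_n = P_n x_0 + y_n$ of (\ref{discfp}) in that ball; uniqueness in $\{\|x - x_0\| \le \delta_0\}$ follows since any such $x \in E_n$ has $\|x - P_n x_0\| \le \delta_0 + \|x_0 - P_n x_0\| \le \delta_\epsilon$ for $n$ large.

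Finally, the contraction estimate gives $\|y_n\| \le 2\|G_n(0)\| \to 0$, i.e. $\|x_n - P_n x_0\| \to 0$, and (\ref{xcon}) is then the triangle inequality. For the two-sided bound (\ref{xbnd}), observe that $Tx_0 = x_0$ makes $\|P_n T x_0 - T_n P_n x_0\| = \|P_n x_0 - T_n P_n x_0\|$; then from $(I - T_n'(P_n x_0))y_n = [T_n(P_n x_0) - P_n x_0] + R_n(x_n)$ the upper estimate follows from $\|y_n\| \le M\|T_n P_n x_0 - P_n x_0\| + M\epsilon\|y_n\|$ by absorbing the last term (giving $c_2 = 2M$), and the lower estimate from $\|T_n P_n x_0 - P_n x_0\| \le (1 + C + \epsilon)\|y_n\|$ (giving $c_1 = (1 + C + \epsilon)^{-1}$). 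The only genuinely delicate point throughout is the uniform discrete invertibility in the second paragraph; everything else is perturbation bookkeeping.
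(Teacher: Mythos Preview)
Your argument is sound and is essentially the standard Newton--Kantorovich/contraction proof one finds in Krasnosel'skii et al.; note, however, that the paper does not give its own proof of this theorem but simply quotes it as \cite[Th.~19.1]{KrasVain}, so there is nothing in the paper to compare against beyond the citation.

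Two small points worth tightening. First, in the uniqueness step for the ball $\{\|x-x_0\|\le\delta_0\}$: your contraction runs on a ball about $P_nx_0$, and an $x\in E_n$ with $\|x-x_0\|\le\delta_0$ only satisfies $\|x-P_nx_0\|\le\delta_0+\|x_0-P_nx_0\|$, which need not be $\le\delta_0$. The fix is exactly what you indicate---uniqueness of fixed points for a $\tfrac12$-Lipschitz map holds on the full $\delta_\epsilon$-ball (no self-map needed for uniqueness), so choose $\delta_0\le\delta_\epsilon/2$ and take $n$ large enough that $\|x_0-P_nx_0\|\le\delta_\epsilon/2$. Second, your claim that the inverse of $I-P_nT'(P_nx_0)$ on $E$ restricts to an inverse on $E_n$ is correct, and the one-line reason is that the equation $x=z+P_nT'(P_nx_0)x$ forces $x\in E_n$ whenever $z\in E_n$; you state this but might make it explicit. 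Neither point affects the substance of your proof.
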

\begin{remark}
\label{remark2.1}
In order to convert the result of the theorem into a useful form, it is
convenient to use the triangle inequality as applied to the bounds of
(\ref{xbnd}):
$$
\|P_{n}T x_{0} - T_{n} P_{n} x_{0}\| \leq
\|(P_{n} - I) Tx_{0}\| + \| Tx_{0} - TP_{n}x_{0}\| +
\|(T - T_{n})P_{n}x_{0}\|. 
$$
The rhs of this inequality dictates the convergence of the overall
approximation.
\end{remark}
\begin{remark}
The authors of \cite{KrasVain} permit the projection operators to be
unbounded; some further assumptions are required in this case. They
observe that, when $P_{n}$ is bounded, the differentiability of $P_{n}T$
follows from that of $T$, and $(P_{n}T)^{\prime}(x) = P_{n}T^{\prime}(x)$.
Our application is confined to the case when $P_{n}$ is bounded. 
\end{remark}
\section{The Model}
In its original form, 
TDDFT includes three components for the potential:  
an external potential, the Hartree potential, 
and a general non-local term representing the exchange-correlation potential, 
which is assumed to include a time history part.
The exchange-correlation term is suppressed here for the analysis, since
there is not a consensus on its representation. When included,
its required analytical
properties resemble those of the Hartree potential. 
If $\hat H$ denotes
the Hamiltonian operator of the system, then the state $\Psi(t)$ of the
system obeys the nonlinear Schr\"{o}dinger equation,
\begin{equation}
\label{eeq}
i \hbar \frac{\partial \Psi(t)}{\partial t} = \hat H \Psi(t).
\end{equation}
Here, 
$\Psi = \{\psi_{1}, \dots, \psi_{N}\}$ 
consists of 
$N$ 
orbitals, and the charge density 
$\rho$ 
is defined by 
$$ \rho({\bf x}, t) = |\Psi({\bf x}, t)|^{2} = 
\sum_{k = 1}^{N} |\psi_{k} ({\bf x}, t)|^{2}.
$$ 
An initial condition,
\begin{equation}
\label{ic}
\Psi(0) = \Psi_{0}, 
\end{equation}
and boundary conditions are included. 
The particles are confined to a bounded region 
$\Omega \subset {\mathbb R}^{3}$ 
and homogeneous Dirichlet boundary conditions hold 
within a closed system. 
$\Psi$ 
denotes a finite vector function of space and time. 
The effective potential 
$V_{\rm e}$ 
is a  real scalar function of the form,
$$
V_{\rm e} ({\bf x},t, \rho) = V({\bf x}, t) + 
W \ast \rho. 
$$
Here, 
$W({\bf x}) = 1/|{\bf x}|$ 
and the convolution
$W \ast \rho$ 
denotes the Hartree potential. If $\rho$ is extended as zero outside
$\Omega$, then, for ${\bf x} \in \Omega$, 
$$
W \ast \rho \; ({\bf x})=\int_{{\mathbb R}^{3}} 
W({\bf x} -{\bf y}) \rho({\bf y})\;d {\bf y},
$$
which depends only upon values $W({\xi})$,
$\|{\xi}\|\leq 
\mbox{diam}(\Omega)$. We may redefine $W$ 
smoothly outside this set,
so as to obtain a function of compact support for which Young's inequality
applies. 
The Hamiltonian operator is given by, 
\begin{equation}
\hat H  
= -\frac{\hbar^{2}}{2m} \nabla^{2} 
 +V({\bf x}, t) + 
W \ast \rho, 
\label{Hamiltonian1}
\end{equation}
and 
$m$ 
designates the effective mass and $\hbar$ the  
normalized Planck's constant.
\subsection{Definition of weak solution}
The solution 
$\Psi$ 
is continuous from the time interval 
$J$, 
to be
defined shortly,  
into the finite energy Sobolev space
of complex-valued 
vector functions which vanish in a generalized sense on the boundary, 
denoted 
$H^{1}_{0}(\Omega)$: $\Psi \in C(J; H^{1}_{0})$. 
The time derivative is continuous from 
$J$ 
into the dual 
$H^{-1}$ 
of 
$H^{1}_{0}$: 
$\Psi \in C^{1}(J; H^{-1})$.  
The spatially dependent test functions 
$\zeta$ 
are
arbitrary in 
$H^{1}_{0}$. 
The duality bracket is denoted 
$\langle f, \zeta \rangle$. 
Norms and inner products are discussed in the appendix.
\begin{definition}
\label{weaksolution}
For 
$J=[0,T_{0}]$,  
the vector-valued function 
$\Psi = \Psi({\bf x}, t)$ 
is a  
weak solution of (\ref{eeq}, \ref{ic}, \ref{Hamiltonian1}) if 
$\Psi \in C(J; H^{1}_{0}(\Omega)) \cap C^{1}(J;
H^{-1}(\Omega)),$ 
if 
$\Psi$ 
satisfies the initial condition 
(\ref{ic}) for 
$\Psi_{0} \in H^{1}_{0}$, 
and if 
$\forall \; 0 < t \leq T$: 
\vspace{.25in}
\begin{equation}
i \hbar\langle \frac{\partial\Psi(t)}{\partial t},
\zeta \rangle  = 
\int_{\Omega} \frac{{\hbar}^{2}}{2m}
\nabla \Psi({\bf x}, t)\cdotp \nabla { \zeta}({\bf x}) 
+ V_{\rm e}({\bf x},t,\rho) \Psi({\bf x},t) { \zeta}({\bf x})
d{\bf x}. 
\label{wsol}
\end{equation}
\end{definition}
\subsection{Hypotheses for the Hamiltonian and theorem statement}
\label{hyps}
\begin{itemize}
\item
The so-called external potential 
$V$ 
is assumed to be 
continuously
differentiable on the closure of the space-time domain. 
\end{itemize}

The following theorem was proved in \cite{J1}, based upon the evolution
operator as presented in \cite{J2}. 
\begin{theorem}
\label{EU}
For any interval 
$[0,T_{0}]$, 
the system (\ref{wsol}) in Definition
\ref{weaksolution}, 
with Hamiltonian
defined by (\ref{Hamiltonian1}),  
has a unique weak solution $\Psi$ if the hypothesis stated for $V$ holds. 
\end{theorem}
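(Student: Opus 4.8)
The plan is to recast the quasilinear problem (\ref{eeq})--(\ref{Hamiltonian1}) as a fixed point equation driven by the \emph{linear} time-dependent evolution family, and then to close it with the contraction mapping principle together with a priori energy bounds. Given a candidate state $\Phi \in C(J;H^{1}_{0})$, form the charge density $\rho_{\Phi} = |\Phi|^{2} = \sum_{k}|\phi_{k}|^{2}$, which lies in $C(J;L^{1}\cap L^{3})$ by the embedding $H^{1}_{0}(\Omega)\hookrightarrow L^{6}(\Omega)$; the associated Hartree potential $W\ast\rho_{\Phi}$ then lies in $C(J;L^{\infty})$ by Young's inequality, since $W$ has been arranged to have compact support. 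The operator $\hat H_{\Phi}(t) = -\tfrac{\hbar^{2}}{2m}\nabla^{2} + V(\cdot,t) + W\ast\rho_{\Phi}(t)$ is then linear, self-adjoint on $L^{2}(\Omega)$, and time dependent only through the bounded multiplication potential $V(\cdot,t)+W\ast\rho_{\Phi}(t)$, with fixed form domain $H^{1}_{0}$. Define $\mathcal{K}\Phi$ to be the solution of $i\hbar\,\partial_{t}u = \hat H_{\Phi}(t)u$, $u(0)=\Psi_{0}$; a weak solution of the nonlinear system in the sense of Definition \ref{weaksolution} is exactly a fixed point of $\mathcal{K}$.

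The first technical ingredient is the linear theory: for each admissible $\Phi$, the family $\{\hat H_{\Phi}(t)\}_{t\in J}$ generates a unitary evolution system $U_{\Phi}(t,s)$ on $L^{2}(\Omega)$ which restricts to a uniformly bounded, strongly continuous family on $H^{1}_{0}(\Omega)$ and is strongly $C^{1}$ into $H^{-1}(\Omega)$. This is the construction of \cite{J2} (following Kato \cite{K1,K2}); its hypotheses are met here because $V\in C^{1}$ on the closed space-time cylinder and $t\mapsto W\ast\rho_{\Phi}(t)$ inherits continuity into $L^{\infty}$ from $\Phi\in C(J;H^{1}_{0})$, so that $t\mapsto\hat H_{\Phi}(t)$ has the stability and regularity required for the evolution family, with the common form domain preserved. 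Unitarity gives mass conservation $\|\mathcal{K}\Phi(t)\|_{L^{2}}=\|\Psi_{0}\|_{L^{2}}$, and an energy estimate on $\|\nabla(\mathcal{K}\Phi)(t)\|_{L^{2}}^{2}$, using the uniform $L^{\infty}$ bound on the potential, yields by Gronwall an a priori bound $\|\mathcal{K}\Phi\|_{C(J;H^{1}_{0})}\le R$ with $R$ depending only on $\|\Psi_{0}\|_{H^{1}_{0}}$, $T_{0}$, $V$, and $\|\Phi\|_{C(J;H^{1}_{0})}$. I expect the verification of Kato's hypotheses for this generator family — the stability estimate and the $H^{-1}$-regularity of $t\mapsto\hat H_{\Phi}(t)$ over a merely bounded (not necessarily smooth) region, together with the borderline mapping properties of the Coulomb kernel in three dimensions — to be the main obstacle, and the place where the results imported from \cite{J2} do the real work.

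With the linear theory in hand, existence and uniqueness follow by a standard semilinear bootstrap. One first shows $\mathcal{K}$ maps a ball $B_{R}\subset C([0,\tau];H^{1}_{0})$ into itself for the a priori radius $R$ above. For $\Phi_{1},\Phi_{2}\in B_{R}$, the difference $w=\mathcal{K}\Phi_{1}-\mathcal{K}\Phi_{2}$ solves $i\hbar\,\partial_{t}w = \hat H_{\Phi_{1}}(t)w + \bigl(W\ast(\rho_{\Phi_{1}}-\rho_{\Phi_{2}})\bigr)(\mathcal{K}\Phi_{2})$ with zero datum, so by Duhamel's formula and the unitarity of $U_{\Phi_{1}}$, together with the local Lipschitz dependence of $\rho\mapsto W\ast\rho$ from a suitable Lebesgue space into $L^{\infty}$ and of $\Phi\mapsto|\Phi|^{2}$ from $H^{1}_{0}$ into that space (controlled on $B_{R}$ by interpolation), one obtains $\|w\|_{C([0,\tau];L^{2})}\le C(R)\,\tau\,\|\Phi_{1}-\Phi_{2}\|_{C([0,\tau];H^{1}_{0})}^{\theta}\|\Phi_{1}-\Phi_{2}\|_{C([0,\tau];L^{2})}^{1-\theta}$; choosing $\tau$ small makes $\mathcal{K}$ a contraction on $B_{R}$ in the appropriate metric, giving a unique local fixed point. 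Since the a priori $H^{1}_{0}$ bound $R$ depends only on the data over the full interval, the same step length $\tau$ works at each stage and the solution extends to all of $[0,T_{0}]$; uniqueness there follows from the same difference estimate and Gronwall's inequality. Finally, $\Psi\in C(J;H^{1}_{0})$ makes the right-hand side $\hat H\Psi$ lie in $C(J;H^{-1})$, so (\ref{eeq}) shows $\Psi\in C^{1}(J;H^{-1})$, completing the argument.
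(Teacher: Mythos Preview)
Your approach is essentially the one the paper adopts (by citation to \cite{J1}): freeze the density $\rho=|\Phi|^{2}$, solve the resulting linear problem via the Kato evolution family $U^{\rho}(t,s)$ built in \cite{J2}, and recover the solution as the unique fixed point of the map $\Phi\mapsto U^{\rho_{\Phi}}(t,0)\Psi_{0}$ by a contraction argument on a ball in $C(J;H^{1}_{0})$. The only minor deviation is that the paper's summary (via identity (\ref{IDENTITY2}) and the Lipschitz estimate (\ref{LipV})) runs the contraction directly in the $C(J;H^{1}_{0})$ norm rather than the mixed $L^{2}/H^{1}_{0}$ metric you sketch, but the strategy and the role of the evolution operator are the same.
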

We briefly summarize the method of \cite{J1}.  
Specifically, we identify the mapping $K: C(J; H^{1}_{0}) \mapsto 
C(J; H^{1}_{0})$
for which the unique solution $\Psi$ of Theorem \ref{EU} is the
unique fixed point of the restriction of $K$ to an appropriate closed ball
on which $K$ is invariant and strictly contractive. 
\begin{definition}
\label{decomp1}
For each $\Psi^{\ast}$ in the domain
$C(J;H^{1}_{0})$ of $K$ 
we obtain the image $K \Psi^{\ast} = \Psi$ by the following
decoupling.
\begin{itemize}
\item
$\Psi^{\ast} \mapsto \rho =
 \rho({\bf x}, t) = |\Psi^{\ast}({\bf x}, t)|^{2}$.
\item
$\rho \mapsto \Psi$ by the solution of the 
{\it associated} linear problem (\ref{wsol})
where the potential $V_{\rm e}$ uses $\rho$ 
in its final argument.
\end{itemize}
\end{definition}
\begin{remark}
One can implement the second stage of the previous definition as follows.
Introduce the linear evolution
operator $U(t,s)$: for given $\Psi^{\ast}$ in
$C(J;H^{1}_{0})$, 
set $U(t,s) = U^{\rho}(t,s)$ so that
\begin{equation}
\label{evolution} 
\Psi(t) = U^{\rho}(t,0) \Psi_{0}. 
\end{equation}
For each $t$, $\Psi(t)$ 
is a function of ${\bf x}$.
Moreover, 
$\Psi = K \Psi^{\ast}$.
The properties of $K$ can be derived with the assistance of the evolution
operators. The following theorem is quoted from the results of
\cite[sections 3.3--3.5]{archive}.
\end{remark}
\begin{theorem}
\label{global}
The mapping $K$ is differentiable on $C(J; H^{1}_{0})$, with a locally
Lipschitz derivative. Moreover, at its unique fixed point $\Psi$,
the linear mapping $I - K^{\prime}(\Psi)$ is invertible with continuous
inverse. For each $\psi \in C(J; H^{1}_{0})$, 
and $\rho = |\psi|^{2}$, the derivative is given
explicitly by
\begin{equation}
K^{\prime}[\psi](\omega) = 
\frac{2i}{\hbar}\int_{0}^{t} 
U^{\rho}(t,s)\left[\mbox{\rm Re}({\bar \psi} \omega) \ast W
\right] 
U^{\rho}(s,0)\Psi_{0} \; ds.
\label{defKprime}
\end{equation}
\end{theorem}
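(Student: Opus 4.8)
The plan is to factor $K=S\circ Q$, where $Q\colon C(J;H^{1}_{0})\to C(J;L^{3})$ is the density map $Q[\psi]=|\psi|^{2}$ and $S\colon\rho\mapsto U^{\rho}(\cdot,0)\Psi_{0}$ is the associated linear-problem map of Definition~\ref{decomp1}, and to differentiate the two factors separately. The map $Q$ is real-quadratic in $\psi$, hence of class $C^{\infty}$, with $Q'[\psi]\omega=\psi\bar\omega+\bar\psi\omega=2\,\mathrm{Re}(\bar\psi\omega)$; the mapping property $H^{1}_{0}\times H^{1}_{0}\to L^{3}$ of the product, and the bound $\|Q'[\psi]-Q'[\psi']\|\lesssim\|\psi-\psi'\|$ on bounded sets, follow from the three-dimensional Sobolev embedding $H^{1}\hookrightarrow L^{6}$. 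Thus everything reduces to the differentiability of $S$, i.e.\ to the dependence $\rho\mapsto U^{\rho}$ of the evolution operator on the Hartree potential.

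For that step I would argue by Duhamel's formula (variation of parameters). Given $\rho$ and a small real-valued perturbation $\sigma$, the Hamiltonians of $U^{\rho}$ and $U^{\rho+\sigma}$ differ by the bounded self-adjoint multiplication operator $W\ast\sigma$, with $\|W\ast\sigma\|_{L^{\infty}}\lesssim\|\sigma\|_{L^{3}}$ by Young's inequality for the compactly supported kernel $W$; in particular the perturbed Hamiltonian stays within the Kato-type framework of \cite{J2} and
\begin{equation}
U^{\rho+\sigma}(t,s)=U^{\rho}(t,s)-\tfrac{i}{\hbar}\int_{s}^{t}U^{\rho+\sigma}(t,\tau)\,(W\ast\sigma)\,U^{\rho}(\tau,s)\,d\tau .
\label{duhamel}
\end{equation}
Iterating (\ref{duhamel}) once and bounding the $O(\|\sigma\|^{2})$ remainder by the uniform bounds on the evolution operators shows that $\rho\mapsto U^{\rho}(t,s)x$ is Fr\'echet differentiable, with derivative $\sigma\mapsto-\tfrac{i}{\hbar}\int_{s}^{t}U^{\rho}(t,\tau)(W\ast\sigma)U^{\rho}(\tau,s)x\,d\tau$; a second use of (\ref{duhamel}), now to estimate $U^{\rho_{1}}-U^{\rho_{2}}$ in terms of $\|\rho_{1}-\rho_{2}\|$, gives local Lipschitz dependence of this derivative on $\rho$. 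Composing with $Q$ by the chain rule at $x=\Psi_{0}$, $s=0$, with $\sigma=Q'[\psi]\omega=2\,\mathrm{Re}(\bar\psi\omega)$, then produces the kernel displayed in (\ref{defKprime}) --- the factor $2$ coming from $Q'$ and $i/\hbar$ from Duhamel's formula --- and local Lipschitz continuity of $K'$ follows by combining the corresponding properties of the three ingredients (product estimates in $H^{1}\hookrightarrow L^{6}$, convolution against $W$, uniform boundedness of $U^{\rho}$ on $H^{1}_{0}$).

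For the invertibility of $I-K'(\Psi)$ on $C(J;H^{1}_{0})$ I would exploit that $K'(\Psi)$ is a \emph{Volterra} operator: by (\ref{defKprime}),
\begin{equation}
(K'(\Psi)\omega)(t)=\tfrac{2i}{\hbar}\int_{0}^{t}\Phi(t,s)\,\omega(s)\,ds,\qquad
\Phi(t,s)\eta:=U^{\rho}(t,s)\,\bigl[\mathrm{Re}(\bar\Psi(s)\eta)\ast W\bigr]\,U^{\rho}(s,0)\Psi_{0},
\label{volterra}
\end{equation}
and the value at $t$ depends only on $\omega|_{[0,t]}$. One checks the $\Phi(t,s)$ are uniformly bounded on $H^{1}_{0}$ for $0\le s\le t\le T_{0}$: $\mathrm{Re}(\bar\Psi(s)\eta)\in L^{3}$ with norm $\lesssim\|\Psi\|_{C(J;H^{1}_{0})}\|\eta\|_{H^{1}_{0}}$; then $\|\mathrm{Re}(\bar\Psi(s)\eta)\ast W\|_{L^{\infty}}+\|\nabla(\mathrm{Re}(\bar\Psi(s)\eta)\ast W)\|_{L^{3}}$ is dominated by the same quantity (Young's inequality, with $W\in L^{2}$ and $\nabla W\in L^{1}$, both compactly supported), so multiplication by $\mathrm{Re}(\bar\Psi(s)\eta)\ast W$ carries $H^{1}_{0}$ into $H^{1}_{0}$ with norm $\lesssim\|\Psi\|_{C(J;H^{1}_{0})}\|\eta\|_{H^{1}_{0}}$; and $U^{\rho}(t,s)$, $U^{\rho}(s,0)$ are bounded on $H^{1}_{0}$ uniformly in $(s,t)$ by the energy-space stability of the evolution operator proved in \cite{J2}. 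Setting $C_{0}:=\sup_{0\le s\le t\le T_{0}}\|\Phi(t,s)\|_{\mathcal{L}(H^{1}_{0})}<\infty$, the Volterra structure yields by induction $\|(K'(\Psi))^{n}\omega\|_{C(J;H^{1}_{0})}\le(2C_{0}/\hbar)^{n}(T_{0}^{n}/n!)\,\|\omega\|_{C(J;H^{1}_{0})}$, so $\sum_{n\ge0}\|(K'(\Psi))^{n}\|_{\mathcal{L}}\le\exp(2C_{0}T_{0}/\hbar)<\infty$ and $(I-K'(\Psi))^{-1}=\sum_{n\ge0}(K'(\Psi))^{n}$ is bounded. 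This argument is insensitive to the length of $J$, which is why invertibility holds over the full interval and not merely where $K$ itself is contractive.

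I expect the main obstacle to be the middle step --- differentiating $\rho\mapsto U^{\rho}$ \emph{in the operator norm of $\mathcal{L}(H^{1}_{0})$} rather than $\mathcal{L}(L^{2})$. This hinges on the energy-space stability of $U^{\rho}$ (from the Kato-type construction in \cite{J2}, given the hypothesis on $V$ and the regularity of $W$) and on the bound $\|(W\ast\sigma)\,\cdot\,\|_{\mathcal{L}(H^{1}_{0})}\lesssim\|\sigma\|$ for the directions $\sigma=\mathrm{Re}(\bar\psi\omega)$ that actually arise (Young's inequality for the compactly supported $W$ together with $H^{1}\hookrightarrow L^{6}$). Granted these, the Duhamel remainder in (\ref{duhamel}) is genuinely $o(\|\sigma\|)$ in the $H^{1}_{0}$ operator norm, and the remaining verifications --- continuity in $(t,s)$, the chain rule, and the Lipschitz bounds --- are routine.
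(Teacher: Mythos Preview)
Your argument is correct and tracks the paper's own approach closely. The paper does not reprove this theorem here but points to \cite{archive}, summarizing the tools in Remark~\ref{tools}; the later proof for $K_{n}$ (the proposition preceding Remark~\ref{remark5.1}) shows the method explicitly, and it is exactly your Duhamel computation. Your identity~(\ref{duhamel}) is the operator form of the paper's identity~(\ref{IDENTITY2}), and your multiplier bound on $W\ast\sigma$ is the content of inequality~(\ref{LipV}). The only organizational difference is that you factor $K=S\circ Q$ and invoke the chain rule, whereas the paper (and \cite{archive}) compute the G\^ateaux quotient directly with $\rho_{\epsilon}=|\psi+\epsilon\omega|^{2}$ and then upgrade to Fr\'echet by continuity; the two are equivalent, your version making the source of the factor~$2$ transparent while the direct computation avoids having to name an intermediate space for $\rho$. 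Your Volterra/Neumann-series argument for the invertibility of $I-K'(\Psi)$ is the standard one and matches what is done in \cite[Section~3.5]{archive}; it is also why invertibility persists over the full interval $J$, not merely over the subinterval on which $K$ is contractive. One small caution: be careful with the sign when matching your Duhamel formula to~(\ref{defKprime}); the conventions in~(\ref{IDENTITY2}) and in the generator $(-i/\hbar)\hat H$ must be kept consistent to recover the stated $+2i/\hbar$.
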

\begin{remark}
\label{tools}
The identity,
\begin{equation}
U^{\rho_{1}}\Psi_{0}(t) - U^{\rho_{2}}\Psi_{0}(t) =  
\frac{i}{\hbar}\int_{0}^{t} U^{\rho_{1}}(t,s)[V_{\rm e}(s, \rho_{1}) -
V_{\rm e}(s, \rho_{2})]U^{\rho_{2}}(s,0)\Psi_{0} \; ds,
\label{IDENTITY2}
\end{equation}
and the inequality,
\begin{equation}
\label{LipV}
\|[V_{\rm e}({\rho_{1}})-
V_{\rm e}({\rho_{2}})]\psi\|_{C(J;H^{1}_{0})} \leq  
C\|\Psi_{1} -
\Psi_{2} 
\|_{C(J;H^{1}_{0})} \|\psi\|_{H^{1}_{0}}, 
\end{equation} 
are used to prove the preceding theorem. Here, $C$ is a locally defined
constant. 
Some adjustments of (\ref{IDENTITY2}) are also employed. Full details are
given in \cite[sect.\ 3]{archive}. The identity (\ref{IDENTITY2}) is derived by
differentiating $U^{\rho_{1}}(t,s) U^{\rho_{2}}(s,0) \Psi_{0}$ with respect
to $s$, and integrating from $0$ to $t$ after using differentiation 
properties of the
evolution operators. As derived, the operator $U^{\rho_{1}}(t,s)$, within the
integral on the rhs of the identity, is interpreted as acting on the dual
space. However, because of (\ref{LipV}),   
the operator $U^{\rho_{1}}(t,s)$ may be interpreted 
as acting invariantly on $H^{1}_{0}$ for each
fixed $s$. Finally, the identity (\ref{IDENTITY2}), 
in conjunction with (\ref{LipV}) and the
invariance properties of the evolution operator, gives the local Lipschitz
property of $K$.
\end{remark}
\section{The Projection Mappings}
The program carried out here is intended to provide the appropriate mathematical
structure to accommodate the classical Faedo-Galerkin method in
conjunction with the Krasnosel'skii calculus introduced earlier.

From Theorem \ref{global}, we see that $\Omega_{0} = E = C(J; H^{1}_{0})$
in the notation of section two. We now discuss the projections $P_{n}$.
\subsection{The approximation spaces}
\begin{definition}
\label{projectionPn}
Let $\{F_{n}\}_{n \geq 1}$ be a sequence of finite-dimensional subspaces of 
the Hilbert space $H^{1}_{0}$, 
and let $Q_{n}$
denote the orthogonal projection onto $F_{n}$ for each $n$. 
We suppose that 
$\|Q_{n} f - f \|_{H^{1}_{0}} \rightarrow 0, \; n \rightarrow \infty$,
for all $f \in H^{1}_{0}$. 
Suppose a basis $\{f_{j}, \; j = 1, \dots, k(n)\}$ is given for $F_{n}$.
If $J = [0, T_{0}]$, then 
$E_{n} \subset C(J; H^{1}_{0})$ is defined by
$$
E_{n} = \left\{ \sum_{j=1}^{k(n)} \alpha_{j}(t) f_{j}({\bf x}), \; \alpha_{j}
\in C(J), \; j = 1, \dots, k(n) \right\}.
$$
Finally, if $\psi \in C(J; H^{1}_{0})$, define $P_{n} \psi (\cdotp, t) :=
Q_{n} \psi(\cdotp, t)$, for each $t$.
\end{definition}
\begin{remark}
In the following proposition, we will verify the consistency of this
definition and the compatibility with the Krasnosel'skii calculus. The
subspaces $E_{n}$ are not vector subspaces, since the basis coefficients are
functions and not scalars. In the mathematical literature, $E_{n}$ is a
free module over a ring (of continuous functions). It seems to be the
appropriate concept for the analysis of the Faedo-Galerkin method.
\end{remark}
\begin{proposition}
In Definition \ref{projectionPn}:
\begin{enumerate}
\item
$\{E_{n}\}$ are closed subspaces of $C(J;H^{1}_{0})$.
\item
$P_{n}$ maps $C(J;H^{1}_{0})$ onto $E_{n}$ and $P_{n}^{2} = P_{n}$.
\end{enumerate} 
\end{proposition}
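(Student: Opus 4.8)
My plan is to reduce the whole statement to two elementary facts about the finite-dimensional space $F_n$: that it is a closed subspace of $H^1_0$, and that the coordinate functionals $\lambda_1,\dots,\lambda_{k(n)}$ associated with the basis $\{f_j\}$ — characterized by $g=\sum_j\lambda_j(g)f_j$ for $g\in F_n$ — are bounded and linear, hence continuous on $F_n$, and extend continuously to all of $H^1_0$ (for instance as $\lambda_j\circ Q_n$). Once these are recorded, both items become routine compositions of continuous maps together with the idempotency of $Q_n$.

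For item (1), I would first observe that $E_n$ is a linear subspace of $C(J;H^1_0)$: it is contained in $C(J;H^1_0)$ because $\|\sum_j\alpha_j(t)f_j-\sum_j\alpha_j(s)f_j\|_{H^1_0}\le\sum_j|\alpha_j(t)-\alpha_j(s)|\,\|f_j\|_{H^1_0}$, and it is closed under the vector operations since $\alpha_j,\beta_j\in C(J)$ forces $\alpha_j+\beta_j$ and $c\alpha_j$ into $C(J)$. To prove closedness, take $\psi^{(m)}\to\psi$ in $C(J;H^1_0)$ with each $\psi^{(m)}\in E_n$. For each fixed $t$, $\psi^{(m)}(\cdot,t)\in F_n$ and $\psi^{(m)}(\cdot,t)\to\psi(\cdot,t)$ in $H^1_0$, so $\psi(\cdot,t)\in F_n$ by closedness of $F_n$; writing $\psi(\cdot,t)=\sum_j\alpha_j(t)f_j$ with $\alpha_j(t):=\lambda_j(\psi(\cdot,t))$, the continuity of $\lambda_j$ together with $\psi\in C(J;H^1_0)$ yields $\alpha_j\in C(J)$, so $\psi\in E_n$.

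For item (2), I would verify in order: (a) $P_n$ maps $C(J;H^1_0)$ into itself, since $t\mapsto Q_n\psi(\cdot,t)$ is the composition of the bounded operator $Q_n$ with the continuous path $t\mapsto\psi(\cdot,t)$ — this also records $\|P_n\|\le\|Q_n\|\le1$; (b) $P_n\psi\in E_n$, by applying the coordinate-functional argument of item (1) to $Q_n\psi(\cdot,t)\in F_n$, so that $P_n\psi=\sum_j(\lambda_j\circ Q_n\circ\psi)f_j$ with continuous coefficients; and (c) $P_n$ acts as the identity on $E_n$, because for $\phi=\sum_j\beta_j(t)f_j$ one has $\phi(\cdot,t)\in F_n$ and $Q_n$ fixes $F_n$ pointwise in $t$, whence $P_n\phi=\phi$. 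Combining (b) and (c) gives surjectivity of $P_n$ onto $E_n$, and applying (c) to $P_n\psi$ — which lies in $E_n$ by (b) — gives $P_n^2=P_n$.

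I do not anticipate a real obstacle; the single point that deserves an explicit word is the continuity in $t$ of the coefficients $\alpha_j$ in the closedness argument, which is exactly where finite-dimensionality of $F_n$ enters, through boundedness of the coordinate functionals. I would also be careful, in view of the preceding remark, to keep the terminology straight: $E_n$ is infinite-dimensional (a free $C(J)$-module of rank $k(n)$), so "closed subspace" here means a closed linear subspace of the Banach space $C(J;H^1_0)$, not a finite-dimensional one.
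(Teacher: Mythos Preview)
Your proof is correct and follows essentially the same approach as the paper: both arguments hinge on the fact that coordinate extraction on the finite-dimensional space $F_n$ is continuous, which yields continuity of the coefficient functions $\alpha_j$. The only cosmetic difference is that the paper first orthonormalizes $\{f_j\}$ via Gram--Schmidt so that the coordinate functionals become explicit inner products $\alpha_j(t)=(\psi(\cdot,t),f_j)_{H^1_0}$, whereas you invoke the boundedness of the coordinate functionals $\lambda_j$ abstractly---and your aside that $E_n$ is indeed a (closed, infinite-dimensional) linear subspace of $C(J;H^1_0)$ is a welcome clarification of the paper's slightly loose remark that it is ``not a vector subspace.''
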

\begin{proof}
We begin with statement (1), which involves two steps.
The first is to show that $E_{n}
\subset C(J;H^{1}_{0})$.
We observe that, by use of the classical Gram-Schmidt procedure, we
may assume that the basis $\{f_{j}\}$ of $F_{n}$ is orthonormal in 
$H^{1}_{0}$. Continuity of the coefficients is preserved. 
With this property, if $t_{0} \in J$ and $t \in J$, then,
for $g \in E_{n}$:
$$
\|g(t) - g(t_{0})\|^{2}_{H^{1}_{0}} = \sum_{j=1}^{k(n)} |\alpha_{j}(t) -
\alpha_{j}(t_{0})|^{2}, 
$$ 
which has zero limit as $t \rightarrow t_{0}$. It follows that $E_{n}
\subset C(J;H^{1}_{0})$.
We now prove that $E_{n}$ is closed. 
Suppose that $\{g_{\ell}\} \subset E_{n}$ converges in the norm of $C(J;
H^{1}_{0})$ to a limit $g$. In particular,  
$\{g_{\ell}\}$ is a Cauchy sequence. 
Again, under the assumption that the basis in
$F_{n}$ is orthonormal, we write 
$$
\|g_{\ell}(t)-g_{m}(t)\|^{2}_{H^{1}_{0}}=\sum_{j = 1}^{k(n)}
|\alpha^{\ell}_{j}(t) - \alpha^{m}_{j}(t) |^{2},
$$
so that the individual coefficient sequences are 
Cauchy sequences in
the Banach space $C(J)$. It follows that the limits $\alpha_{j}(t)$ are
continuous on $J$ and that $g = \sum \alpha_{j} f_{j}$. We conclude that 
$E_{n}$ is a closed subspace of $C(J; H^{1}_{0})$. 

In order to prove statement (2), we must show that the coefficients
induced by the orthogonal projection $Q_{n}$ are continuous functions of
$t$. In fact, the difference of these
coefficients is given explicitly by the inner product, 
$$
\alpha_{j}(t) - \alpha_{j}(t_{0})  = 
(\psi(\cdotp,t)- \psi(\cdotp, t_{0}),  f_{j})_{H^{1}_{0}},
$$
which is estimated in norm by $\|\psi(\cdotp, t) - \psi(\cdot, t_{0})\|_{
H^{1}_{0}}.$ By the definition of the Banach space $C(J; H^{1}_{0})$, this
has a zero limit as $t \rightarrow t_{0}$. The remaining statements of (2) are
evident.  
\end{proof}
\subsection{The hypotheses of the Krasnosel'skii calculus: I}
\label{hypI}
The propositions of this section deal with hypotheses pertaining to
the fixed point mapping and its relation to the projections.
\begin{proposition}
\label{prop4.2}
The hypotheses (\ref{projcon}) and (\ref{projTcon}) 
hold for the case $T = K$, $x_{0} = \Psi$, 
and $P_{n}$ as defined in Definition
\ref{projectionPn}. 
Specifically,
\begin{eqnarray}
\|P_{n} \Psi - \Psi \|_{C(J; H^{1}_{0})} & \rightarrow 0 &, \label{new3}\\
\|P_{n} K P_{n} \Psi - K \Psi \|_{C(J; H^{1}_{0})} & \rightarrow 0 &,
\label{new4}
\end{eqnarray}
as $n \rightarrow \infty$.
\end{proposition}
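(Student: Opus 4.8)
The plan is to reduce both claims to a single uniform-convergence statement for the orthogonal projections $Q_n$ on compact subsets of $H^1_0$, and then to invoke the continuity of $K$ from Theorem \ref{global}. Two preliminary observations drive everything. First, $\|P_n\|_{\mathcal L(C(J;H^1_0))}\le 1$: for $\psi\in C(J;H^1_0)$ one has $\|P_n\psi\|_{C(J;H^1_0)}=\sup_{t\in J}\|Q_n\psi(\cdot,t)\|_{H^1_0}\le\sup_{t\in J}\|\psi(\cdot,t)\|_{H^1_0}$, since an orthogonal projection is norm-nonincreasing. Second, if $\mathcal K\subset H^1_0$ is compact then $\sup_{f\in\mathcal K}\|Q_nf-f\|_{H^1_0}\to 0$; this is the standard consequence of the pointwise convergence $Q_nf\to f$ (hypothesized in Definition \ref{projectionPn}) together with the uniform bound $\|Q_n\|\le 1$, proved by covering $\mathcal K$ with finitely many $\epsilon$-balls centered at $g_1,\dots,g_m$, choosing $n$ so that $\|Q_ng_i-g_i\|_{H^1_0}<\epsilon$ for every $i$, and applying the triangle inequality to obtain $\|Q_nf-f\|_{H^1_0}<3\epsilon$ for all $f\in\mathcal K$.

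For (\ref{new3}) I would note that, since $\Psi\in C(J;H^1_0)$ and $J$ is a compact interval, the trajectory $\mathcal K:=\{\Psi(\cdot,t):t\in J\}$ is a compact subset of $H^1_0$; hence $\|P_n\Psi-\Psi\|_{C(J;H^1_0)}=\sup_{t\in J}\|Q_n\Psi(\cdot,t)-\Psi(\cdot,t)\|_{H^1_0}\to 0$ by the compact-set lemma. For (\ref{new4}), I would split
\[
P_nKP_n\Psi-K\Psi=P_n\bigl(KP_n\Psi-K\Psi\bigr)+\bigl(P_nK\Psi-K\Psi\bigr).
\]
Since $\Psi$ is the fixed point of $K$, the second term is exactly $P_n\Psi-\Psi$ and tends to $0$ by (\ref{new3}) (alternatively, apply the compact-set lemma to $\{(K\Psi)(\cdot,t):t\in J\}$). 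For the first term, the bound $\|P_n\|\le 1$ gives $\|P_n(KP_n\Psi-K\Psi)\|_{C(J;H^1_0)}\le\|KP_n\Psi-K\Psi\|_{C(J;H^1_0)}$, which tends to $0$ because $K$ is locally Lipschitz, hence continuous, on $C(J;H^1_0)$ by Theorem \ref{global}, and $P_n\Psi\to\Psi$ in $C(J;H^1_0)$ by (\ref{new3}). Adding the two contributions yields (\ref{new4}).

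The step I expect to be the crux is the upgrade from the pointwise-in-$t$ convergence $Q_n\Psi(\cdot,t)\to\Psi(\cdot,t)$ — which is all that the hypothesis on $\{Q_n\}$ directly supplies — to convergence uniform in $t\in J$. This is precisely where the compactness of the trajectory $\Psi(J)$ in $H^1_0$, a consequence of $\Psi\in C(J;H^1_0)$, must be combined with the uniform operator bound $\|Q_n\|\le 1$; once that uniform-convergence lemma is established, the remainder is the triangle inequality and the already-established regularity of $K$.
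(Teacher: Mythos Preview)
Your argument is correct and, for (\ref{new4}), follows the paper's own route: the same splitting via the triangle inequality, the bound $\|P_n\|\le 1$, the fixed-point identity $K\Psi=\Psi$, and the local Lipschitz continuity of $K$.

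For (\ref{new3}) you are in fact more careful than the paper. The paper simply asserts that (\ref{new3}) ``has been built into the definition of the projection family,'' but Definition~\ref{projectionPn} only postulates pointwise convergence $\|Q_nf-f\|_{H^1_0}\to 0$ for each fixed $f\in H^1_0$; the passage to uniform-in-$t$ convergence is not explicit there. Your compactness argument (the trajectory $\Psi(J)$ is compact in $H^1_0$, the $Q_n$ are uniformly bounded, hence pointwise convergence becomes uniform on $\Psi(J)$) supplies exactly the missing step. So the two proofs agree in structure, with yours making rigorous what the paper leaves implicit.
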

\begin{proof}
The hypothesis (\ref{new3}) has been built into the definition of the
projection family. We rewrite (\ref{new4}) as follows, using the triangle
inequality, the fixed point property,  and the property $\|P_{n} \| = 1$.
$$
\|P_{n} K P_{n} \Psi - K \Psi \|_{C(J; H^{1}_{0})} \leq
\|P_{n} K P_{n} \Psi - P_{n} K \Psi \|_{C(J; H^{1}_{0})} + 
\|P_{n} K \Psi -  K \Psi \|_{C(J; H^{1}_{0})} \leq 
$$
$$
\|K P_{n} \Psi - K \Psi \|_{C(J; H^{1}_{0})} + 
\|P_{n}  \Psi -   \Psi \|_{C(J; H^{1}_{0})}.
$$ 
The (local Lipschitz) continuity of $K$ and (\ref{new3}) 
imply the convergence expressed
in (\ref{new4}).
\end{proof}
In order to address hypothesis (\ref{firstder}), 
we require a regularization hypothesis for $K^{\prime}(\Psi)$.
\begin{definition} 
\label{reghyp}
Denote by ${\mathcal B}$ the closed unit ball of $H^{1}_{0}$, and by 
${\mathcal C}$ the set $C(J; {\mathcal B})$. Finally, set 
${\mathcal K} = K^{\prime}(\Psi) {\mathcal C}$. If the maximal dispersion
of the set ${\mathcal K}$ from $E_{n}$ is defined by
$$
{\mathcal E}({\mathcal K}; E_{n}) := \sup_{\phi \in {\mathcal K}}
\|\phi -P_{n}\phi\|_{C(J; H^{1}_{0})},
$$
then the regularization hypothesis asserts that
$$
\lim_{n \rightarrow \infty} {\mathcal E}({\mathcal K}; E_{n}) = 0.
$$
\end{definition}
Naturally, one wishes to find subspaces $F_{n} \subset H^{1}_{0}$ such
that the dispersion ${\mathcal E}$ is minimized. This represents a mild
extension of
an important topic in approximation theory, termed the $n$-width, 
which characterizes minimal dispersion from $n$-dimensional subspaces.
Here, we have the additional structure of time dependence. 
The $n$-width was introduced by A.\ N.\ Kolmogorov in \cite{Kol}, and has
been intensively studied by many authors.

According to classical approximation theory, one expects the
regularization hypothesis to hold if members of ${\mathcal K}$
have spatial regularity exceeding that for $H^{1}_{0}$. The exact formula
for $K^{\prime}(\Psi)$, given by (\ref{defKprime}), allows some observations. 
For example, the middle (convolution) term is in $C(J; H^{2})$, as can be
shown by distributing the second order derivative, and then applying the
general Young's inequality. There is a bound dependent only on that 
of $\omega$.
Moreover, it was shown in \cite{JP} that 
$U^{\rho}(s,0) \Psi_{0}$ is in $C(J; H^{2} \cap H^{1}_{0})$ if $\Psi_{0}
\in H^{2} \cap H^{1}_{0}$. One expects a product of such $H^{2}$
functions to be in a fractional order Sobolev space with index greater
than one. This means that the hypothesis is not unrealistic. 
\begin{proposition}
Suppose the regularization hypothesis of Definition \ref{reghyp} holds. Then 
(\ref{firstder}) holds for the case
$T = K$, $x_{0} = \Psi$, 
and $P_{n}$ as defined in Definition
\ref{projectionPn}. We have: 
\begin{equation}
\label{new5} 
\|P_{n} K^{\prime} (P_{n} \Psi) - K^{\prime} (\Psi) \|  \rightarrow 0,
\; n \rightarrow \infty. 
\end{equation}
\end{proposition}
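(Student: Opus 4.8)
The plan is to decompose the operator difference into a term controlled by the local Lipschitz continuity of $K^{\prime}$ and a term controlled by the regularization hypothesis. Writing, with $\|\cdot\|$ denoting the norm of a bounded linear operator on $C(J;H^1_0)$,
$$
P_n K^{\prime}(P_n\Psi) - K^{\prime}(\Psi) = P_n\bigl[K^{\prime}(P_n\Psi) - K^{\prime}(\Psi)\bigr] + [P_n - I]\,K^{\prime}(\Psi),
$$
it suffices to show that each summand tends to $0$ as $n \to \infty$.

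For the first summand I would use $\|P_n\| = 1$ together with the local Lipschitz property of $K^{\prime}$ furnished by Theorem \ref{global}. By (\ref{new3}), $\|P_n\Psi - \Psi\|_{C(J;H^1_0)} \to 0$, so for $n$ large $P_n\Psi$ lies in a fixed closed ball about $\Psi$ on which $K^{\prime}$ admits a uniform Lipschitz constant $L$; hence
$$
\bigl\|P_n\bigl[K^{\prime}(P_n\Psi) - K^{\prime}(\Psi)\bigr]\bigr\| \;\leq\; \bigl\|K^{\prime}(P_n\Psi) - K^{\prime}(\Psi)\bigr\| \;\leq\; L\,\|P_n\Psi - \Psi\|_{C(J;H^1_0)} \;\longrightarrow\; 0.
$$

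For the second summand, the point is to recognize that the set $\mathcal{C} = C(J;\mathcal{B})$ of Definition \ref{reghyp} is exactly the closed unit ball of $C(J;H^1_0)$, since a continuous $\omega\colon J \to H^1_0$ satisfies $\|\omega\|_{C(J;H^1_0)} \leq 1$ if and only if $\omega(t) \in \mathcal{B}$ for every $t \in J$. Because $K^{\prime}(\Psi)$ is bounded (Theorem \ref{global}), the image $\mathcal{K} = K^{\prime}(\Psi)\mathcal{C}$ is a bounded set, and
$$
\bigl\|[P_n - I]\,K^{\prime}(\Psi)\bigr\| = \sup_{\omega \in \mathcal{C}} \bigl\|[I - P_n]K^{\prime}(\Psi)\omega\bigr\|_{C(J;H^1_0)} = \sup_{\phi \in \mathcal{K}} \|\phi - P_n\phi\|_{C(J;H^1_0)} = {\mathcal E}({\mathcal K};E_n),
$$
which tends to $0$ precisely by the regularization hypothesis. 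Combining the two estimates yields (\ref{new5}).

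I do not expect a deep obstacle: the analytic content has been absorbed into the standing regularization hypothesis, and what remains is essentially the bookkeeping above. The step most warranting care is the middle equality in the last display — the identification of the operator norm of $[P_n - I]K^{\prime}(\Psi)$ with the dispersion ${\mathcal E}({\mathcal K};E_n)$, which rests on $\mathcal{C}$ being the unit ball of $C(J;H^1_0)$ and on $P_n$ acting as $Q_n$ pointwise in $t$ — together with the confirmation that the local Lipschitz bound for $K^{\prime}$ near $\Psi$ may be invoked with a constant independent of $n$, which is legitimate once (\ref{new3}) places $P_n\Psi$ in a fixed neighborhood of $\Psi$.
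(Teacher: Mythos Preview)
Your proof is correct and follows essentially the same approach as the paper: the same triangle-inequality decomposition into $P_n[K'(P_n\Psi)-K'(\Psi)]$ and $(P_n-I)K'(\Psi)$, with the first term handled by continuity of $K'$ (you invoke the stronger local Lipschitz property, which is fine) together with $\|P_n\|=1$ and (\ref{new3}), and the second term handled by the regularization hypothesis. Your explicit identification of $\|(P_n-I)K'(\Psi)\|$ with the dispersion $\mathcal{E}(\mathcal{K};E_n)$ via the observation that $\mathcal{C}$ is the closed unit ball of $C(J;H^1_0)$ is a welcome clarification that the paper leaves implicit.
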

\begin{proof}
The norms used here
are those of the uniform operator topology. Explicitly: 
$$
\|P_{n} K^{\prime}(P_{n} \Psi) - K^{\prime}(\Psi) \| \leq
\|P_{n} K^{\prime}(P_{n} \Psi) - P_{n} K^{\prime}(\Psi) \| +
\|P_{n} K^{\prime}(\Psi) -  K^{\prime}(\Psi) \|.
$$
The continuity of $K^{\prime}$, the definition of $P_{n}$, and 
the definition of the uniform operator topology imply the estimate for the
first term. The second term requires the regularization hypothesis of
Definition \ref{reghyp}.
Thus, we obtain (\ref{new5}).
\end{proof}
\section{The Numerical Fixed Point Map}
We identify the mapping $K_{n}: E_{n} \mapsto 
E_{n}$ which serves as the numerical fixed point map. It is the analog of
the mapping $K$ of Definition \ref{decomp1}. 
\begin{definition}
\label{decomp2}
For each $u^{\ast}$ in the domain
$E_{n}$ of $K_{n}$ 
we obtain the image $K_{n} u^{\ast} = u \in E_{n}$ by the following
decoupling.
\begin{itemize}
\item
$u^{\ast} \mapsto \rho =
 \rho({\bf x}, t) = |u^{\ast}({\bf x}, t)|^{2}$.
\item
$\rho \mapsto u$ by the solution of the linear 
system (\ref{step2linFGal}) to follow, 
where the potential $V_{\rm e}$ uses $\rho$ 
in its final argument.
\end{itemize}
\end{definition}
\subsection{The Galerkin operator}
\label{GO}
\begin{definition}
\label{absCgal}
Let $F_{n}  \subset H^{1}_{0}(\Omega)$ be 
a given finite dimensional linear subspace, with positive dimension $k(n)$. 
Define, for each fixed $t \in [0,T_{0}]$, and 
$u(\cdotp, t) \in F_{n}$, 
the relation,
\begin{equation}
\label{Galerkin}
{\mathcal G}(t, u)[v] = 
\int_{\Omega} \left\{\frac{{\hbar}^{2}}{2m}
\nabla u({\bf x},t)\cdotp \nabla {v}({\bf x}) 
+ V_{\rm e}({\bf x},t, \rho) u({\bf x},t) {v}({\bf x})
\right\}d{\bf x}, \; 
\forall v \in F_{n}, 
\end{equation}
where $\rho = |u^{\ast}|^{2}$, for a given $u^{\ast} \in E_{n}$.
Finally, $u \in E_{n}$ is a solution of the linear 
Faedo-Galerkin equation if 
\begin{equation}
\label{step2linFGal}
i \hbar \frac{\partial u}{\partial t} 
= {\mathcal G}(t, u), \;
u(\cdotp, 0) = Q_{n} \Psi_{0},
\end{equation}
for $\partial u/\partial t$ a continuous linear functional on $E_{n}$
Here, $Q_{n}$ retains its meaning as the orthogonal projection onto
$F_{n}$. In the event that $\rho = |u|^{2}$, we say that $u$ is a solution
of the nonlinear Faedo-Galerkin equation.
\end{definition}
\begin{remark}
\label{remark5.1}
There are three separate points of analysis remaining for the numerical
fixed point map $K_{n}$. We state them here.
\begin{itemize}
\item
Definition \ref{decomp2} is consistent. In particular, this entails a
liner theory for equation (\ref{step2linFGal}). 
\item
$K_{n}$ is continuously differentiable.
\item
Convergence properties (\ref{projTncon}), (\ref{secondder}), 
and (\ref{conderTn}) hold in the present context.
\end{itemize}
\end{remark}
We take these up in the following subsections. Note that the existence of
numerical fixed points is a consequence of the theory; it is {\it not}
necessary to establish this independently.
\subsection{The linear problem: Faedo-Galerkin evolution operators} 
The Galerkin operator ${\mathcal G}$ and the Galerkin Cauchy problem
(\ref{step2linFGal}) are solved by the evolution operators associated with
the subspace $E_{n}$, where $E_{n}$ has the meaning of
Definition \ref{projectionPn}. 
A less general potential, consisting only of an external
potential, was considered in \cite{JF} with simpler estimates.
\begin{theorem}
\label{lacp}
The linear approximate Cauchy problem (\ref{step2linFGal}) 
is solvable for $u = \Psi_{\mathcal G}$  
by the formula,
$$
\Psi_{\mathcal G}(\cdotp,t) = U_{{\mathcal G}}(t, 0) Q_{n}\Psi_{0}.
$$
Here, $U_{{\mathcal G}}(t,s)$ denotes the time-ordered evolution operator, 
which acts
invariantly on $F_{n}$ and is strongly
differentiable in both arguments. 
As a function of ${\bf x}$ and $t$,
$\Psi_{\mathcal G}$ is in $E_{n}$ and is interpreted as  
the linear 
Faedo-Galerkin
approximation.
\end{theorem}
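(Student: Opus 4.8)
The plan is to reduce (\ref{step2linFGal}) to a linear system of ordinary differential equations in the coefficient space and then to invoke the classical theory of linear ODEs with continuous coefficients. Fix an orthonormal basis $\{f_{j}\}_{j=1}^{k(n)}$ of $F_{n}$ in $H^{1}_{0}$, available by the Gram--Schmidt argument used in the proof of the earlier proposition. Writing the sought solution as $u(\cdotp,t)=\sum_{j}\alpha_{j}(t) f_{j}$ and pairing (\ref{step2linFGal}) with each basis element $f_{\ell}$, one obtains $i\hbar\, M\dot{\alpha}(t)=(S+P(t))\,\alpha(t)$, where $M_{\ell j}=(f_{j},f_{\ell})_{L^{2}}$ is the (symmetric positive definite) mass matrix, $S_{\ell j}=\frac{\hbar^{2}}{2m}(\nabla f_{j},\nabla f_{\ell})_{L^{2}}$ the stiffness matrix, and $P(t)_{\ell j}=\int_{\Omega}V_{\rm e}({\bf x},t,\rho)\,f_{j}f_{\ell}\,d{\bf x}$ the potential matrix. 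Since $M$ is invertible this is equivalent to $\dot{\alpha}(t)=A(t)\alpha(t)$ with $A(t)=-\frac{i}{\hbar}M^{-1}(S+P(t))$, and the initial condition $u(\cdotp,0)=Q_{n}\Psi_{0}$ becomes $\alpha_{j}(0)=(\Psi_{0},f_{j})_{H^{1}_{0}}$.

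The first substantive step is to verify that $t\mapsto A(t)$ is continuous on $J$, which amounts to continuity of $t\mapsto P(t)$. The external part $\int_{\Omega}V({\bf x},t)f_{j}f_{\ell}\,d{\bf x}$ is continuous because $V\in C^{1}(\overline{\Omega}\times J)$ and $f_{j}f_{\ell}\in L^{1}(\Omega)$. For the Hartree part, since $u^{\ast}\in E_{n}\subset C(J;H^{1}_{0})$ and $H^{1}_{0}\hookrightarrow L^{6}$, the density $\rho(\cdotp,t)=|u^{\ast}(\cdotp,t)|^{2}$ depends continuously on $t$ in $L^{3}(\Omega)$; Young's inequality applied to $W$ (redefined to have compact support as in Section 3, so that $W\in L^{3/2}$) then shows $W\ast\rho(\cdotp,t)$ is continuous from $J$ into $L^{\infty}(\Omega)$, and pairing against $f_{j}f_{\ell}\in L^{1}(\Omega)$ yields continuity of the matrix entries. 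Note also that $S$ and $P(t)$ are Hermitian, so $A(t)$ is skew-adjoint in the $M$-inner product; this gives the $L^{2}$-conservation $\|u(\cdotp,t)\|_{L^{2}}=\|Q_{n}\Psi_{0}\|_{L^{2}}$, the finite-dimensional analogue of the unitarity of the continuous evolution operator, although for the statement only the (automatic) invariance of $U_{\mathcal G}(t,s)$ on $F_{n}$ is needed.

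With $A(\cdotp)\in C(J;\mathbb{C}^{k(n)\times k(n)})$ in hand, the classical theory of linear ODE systems supplies a state-transition matrix $\Phi(t,s)$ --- the time-ordered (Dyson) solution --- which is jointly continuous, continuously differentiable in each argument with $\partial_{t}\Phi(t,s)=A(t)\Phi(t,s)$, $\partial_{s}\Phi(t,s)=-\Phi(t,s)A(s)$, $\Phi(s,s)=I$, and obeys the composition law $\Phi(t,r)\Phi(r,s)=\Phi(t,s)$. Define $U_{\mathcal G}(t,s)$ on $F_{n}$ by $\sum_{j}\beta_{j}f_{j}\mapsto\sum_{j}(\Phi(t,s)\beta)_{j}f_{j}$; it acts invariantly on $F_{n}$, inherits strong differentiability in both arguments from the $C^{1}$ regularity of $\Phi$, and $\Psi_{\mathcal G}(\cdotp,t):=U_{\mathcal G}(t,0)Q_{n}\Psi_{0}=\sum_{j}\alpha_{j}(t)f_{j}$ with $\alpha=\Phi(\cdotp,0)\alpha(0)\in C^{1}(J;\mathbb{C}^{k(n)})$, so $\Psi_{\mathcal G}\in E_{n}$ and $\partial\Psi_{\mathcal G}/\partial t$ is a continuous linear functional on $E_{n}$. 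That $\Psi_{\mathcal G}$ solves (\ref{step2linFGal}), and is the unique such solution, follows by reversing the reduction: pairing against the basis recovers exactly $\dot{\alpha}=A\alpha$ with the prescribed initial value, whose solution is unique.

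I expect the main obstacle to be the continuity (and, should it be needed later, higher regularity) of the potential matrix $P(t)$: the careful handling of the time-dependence of the Hartree term through the chain $u^{\ast}\in C(J;H^{1}_{0})\rightsquigarrow\rho\in C(J;L^{3})\rightsquigarrow W\ast\rho\in C(J;L^{\infty})$ via Sobolev embedding and Young's inequality. Once this is secured, the remainder is the standard, essentially mechanical, theory of finite-dimensional linear evolution systems.
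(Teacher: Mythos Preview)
Your argument is correct and complete for the statement as formulated: since $F_{n}$ is finite-dimensional, reducing (\ref{step2linFGal}) to a linear ODE system with continuous coefficients and invoking the classical state-transition matrix is entirely legitimate, and your verification of the continuity of $t\mapsto P(t)$ via Sobolev embedding and Young's inequality is the right chain of estimates.

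The paper takes a genuinely different route. Rather than exploiting finite-dimensionality, it constructs $U_{\mathcal G}(t,s)$ within Kato's abstract evolution-operator framework, exactly paralleling the construction of the full evolution operator $U^{\rho}(t,s)$ on $H^{-1}/H^{1}_{0}$: it builds semigroups $W_{\mathcal G}(t,s)$ on the dual of $F_{n}$ by restriction of the full semigroups, and then verifies the Kato commutator relation $S_{\mathcal G}\,\mathcal G(t)\,S_{\mathcal G}^{-1}=\mathcal G(t)+B_{\mathcal G}(t)$ with a bounded perturbation $B_{\mathcal G}(t)$ estimated via the generalized H\"older inequality. What this buys is structural uniformity with the infinite-dimensional theory: the $H^{1}_{0}$-bound on $U_{\mathcal G}$ comes from exponentiating the bound on $B_{\mathcal G}$, which is exactly the mechanism later invoked (see the remark following Lemma~\ref{uniform}) to get bounds on $U_{\mathcal G}^{\rho_{\epsilon}}$ uniform in $\epsilon$, and which also makes the identities comparing $U^{\rho_{n}}$ with $U_{\mathcal G}^{\rho_{n}}$ in Proposition~\ref{KnapproximatesK} transparent. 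Your ODE approach is more elementary and self-contained, but the bound it naturally produces on $\Phi(t,s)$ depends on $\|A(t)\|$, which grows with $n$; to recover the $H^{1}_{0}$-stability needed downstream you would have to supplement it with an energy estimate (your observation on skew-adjointness in the $M$-inner product gives $L^{2}$-conservation, which is a start but not quite what is used later).
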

\begin{proof}
The generation of the evolution operators from a family of stable,
strongly continuous semigroups on a frame space $X$ is discussed in 
\cite[Ch.\ 6]{J2}, based on Kato's original results in \cite{K1,K2}. 
For the model
considered here, and, indeed, a somewhat more general one, the details are
presented in \cite{J1} for the family $(-i/\hbar) \{\hat H(t) \}$ of
generators of contractive semigroups $\{W(t,s)\}$  
on $H^{-1}$. These semigroups are
shown to act stably on $H^{1}_{0}$ via a similarity relation which is a
cornerstone of Kato's theory. Equivalently, this is expressed as a
commutator relation. 
The proof here amounts to the verification 
that the generators $(-i/\hbar) \{{\mathcal G}(t) \}$ enjoy similar
properties on $F_{n}$ and its dual, resp. We may obtain these semigroups,
denoted $\{W_{{\mathcal G}}(t,s)\}$, from the family $\{W(t,s)\}$ as follows.   
If $\ell$ is a (continuous) linear functional on $F_{n}$, extend $\ell$ to 
${\hat \ell} \in H^{-1}$ by defining ${\hat \ell}$ to be zero on the
orthogonal complement of $F_{n}$ in $H^{1}_{0}$. We can define
$$
W_{{\mathcal G}}(t,s)[\ell] := W(t,s)[{\hat \ell} \;]\;|_{F_{n}}.
$$ 
The remainder of the proof deals with the invariance of 
$\{W_{{\mathcal G}}(t,s)\}$ on $F_{n}$.
We recall that for the full Hamiltonian, this means that 
\begin{equation}
\label{simH}
S {\hat H}(t) S^{-1} = {\hat H}(t) + B(t),
\end{equation}
where $S$ is an isomorphism from $H^{1}_{0}$ to $H^{-1}$, and
where $B(t)$ represents a bounded operator on (all of) $H^{-1}$,
with norm uniformly estimated in $t$.
It was shown in \cite{J1} that, if $S$ is the canonical isomorphism from 
$H^{1}_{0}$ to $H^{-1}$, then 
$B(t)$ is given explicitly  
on $H^{-1}$ by:
\begin{equation}
\langle B(t) \ell, \zeta \rangle 
 = \int_{\Omega} \left[\frac{\hbar^{2}}{2m}\psi\nabla V_{ \rm e}  
\cdotp \nabla { \zeta} \right]  
\; d{\bf x}, \; \forall \zeta \in H^{1}_{0},
\label{BH}
\end{equation}
where $\psi = S^{-1} \ell$.
For the Faedo-Galerkin case, one replaces (\ref{simH}) by
\begin{equation}
\label{simG}
S_{{\mathcal G}} {\mathcal G}(t) S^{-1}_{{\mathcal G}}  = 
{\mathcal G}(t) + B_{{\mathcal G}}(t),
\end{equation}
where $S_{{\mathcal G}}$ is an induced isomorphism from 
$F_{n}$ to its dual, and (\ref{BH}) is replaced by
\begin{equation}
\langle B_{{\mathcal G}}(t) \ell, \zeta \rangle 
 = \int_{\Omega} \left[\frac{\hbar^{2}}{2m}\psi\nabla V_{ \rm e}  
\cdotp \nabla { \zeta} \right]  
\; d{\bf x}, \; \forall \zeta \in F_{n},
\label{BG}
\end{equation}
where $\psi = S^{-1}_{{\mathcal G}}\ell$.
The estimate is obtained by the generalized H\"{o}lder inequality,
with $\psi \in L^{6}, \nabla V_{\rm e} \in L^{3}, \nabla \zeta \in L^{2}$.
In particular, we obtain the evolution operator
$U_{{\mathcal G}}(t,s)$ and the corresponding theory for the Cauchy
problem. 
\end{proof}
\begin{remark}
\label{extensionofUGal}
The $L^{3}$ property of $\nabla V_{\rm e}$ is established in
\cite{archive} in Lemma 3.1 and its proof.
It is possible to extend $U_{{\mathcal G}}$ to all of $C(J; H^{1}_{0})$ as
follows: For fixed $t$, for $s < t$, and $\psi \in C(J; H^{1}_{0})$, set 
$$
U_{{\mathcal G}}(t,s) \psi (\cdotp, s) :=  
U_{{\mathcal G}}(t,s) Q_{n} \psi (\cdotp, s).
$$
In words, we consider the action only on the orthogonal projection of a
given function at time $s$.

For later use,
we observe that the same reasoning can be applied to a construction
of the evolution operators associated with $I - Q_{n}$. 
\end{remark}
\subsection{Differentiability of $K_{n}$}
There are two steps in the analysis of the differentiability of $K_{n}$: 
\begin{itemize}
\item
The verification of the existence of the G\^{a}teaux derivative at each member
of $C(J; E_{n})$ with its accompanying formula; 
\item
The verification of the continuity of the G\^{a}teaux derivative,
hence the existence of the continuous  
Fr\'{e}chet derivative with the same formula.
\end{itemize}
We 
begin with a preliminary lemma. 
\begin{lemma}
\label{uniform}
Suppose that $\psi_{\epsilon}$ converges to $\psi$ in $E_{n}$
as $\epsilon \rightarrow 0$. Then 
$U_{{\mathcal G}}^{\rho_{\epsilon}}$ converges to
$U_{{\mathcal G}}^{\rho}$ in the operator topology, uniformly in $t,s$.
In fact, the convergence is of order 
$O(\|\psi_{\epsilon} - \psi\|_{C(J;H^{1}_{0})})$.
\end{lemma}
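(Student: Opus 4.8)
The plan is to reproduce, at the Faedo-Galerkin level, the Duhamel-type identity (\ref{IDENTITY2}) together with the Lipschitz bound (\ref{LipV}), and to combine these with the uniform stability of the evolution operators furnished by Theorem \ref{lacp}. Throughout, $\rho_\epsilon = |\psi_\epsilon|^2$ and $\rho = |\psi|^2$.

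First I would derive the Galerkin analog of (\ref{IDENTITY2}): for $\phi \in F_n$ and $t,s \in J$, differentiate $U_{\mathcal G}^{\rho_\epsilon}(t,\tau)\,U_{\mathcal G}^{\rho}(\tau,s)\phi$ with respect to $\tau$, using the strong differentiability of $U_{\mathcal G}$ in both arguments (Theorem \ref{lacp}) and the fact that ${\mathcal G}(\tau,\cdot)$ for $\rho_\epsilon$ and for $\rho$ differ only in the multiplicative potential term, so that the kinetic parts cancel. Integrating in $\tau$ from $s$ to $t$ gives
$$
U_{\mathcal G}^{\rho_\epsilon}(t,s)\phi - U_{\mathcal G}^{\rho}(t,s)\phi = \frac{i}{\hbar}\int_s^t U_{\mathcal G}^{\rho_\epsilon}(t,\tau)\bigl[V_{\rm e}(\tau,\rho_\epsilon) - V_{\rm e}(\tau,\rho)\bigr] U_{\mathcal G}^{\rho}(\tau,s)\phi \, d\tau.
$$
As in Remark \ref{tools}, the factor $U_{\mathcal G}^{\rho_\epsilon}(t,\tau)$ is a priori interpreted on the dual of $F_n$, but (\ref{LipV}) shows that the bracketed term maps into $H^1_0$, so $U_{\mathcal G}^{\rho_\epsilon}(t,\tau)$ may be read as acting invariantly on $F_n$.

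Next I would estimate in the $H^1_0$-norm. Let $M$ denote a bound for $\|U_{\mathcal G}^{\sigma}(t,\tau)\|_{\mathcal{L}(H^1_0)}$ that is uniform over $t,\tau \in J$ and over $\sigma \in \{\rho_\epsilon\}_\epsilon \cup \{\rho\}$; such an $M$ exists because the stability of $U_{\mathcal G}$ rests on the commutator relation (\ref{simG}) with $\|B_{\mathcal G}(\tau)\|$ uniformly estimated in $\tau$, and because $\psi_\epsilon \to \psi$ in $C(J;H^1_0)$ forces the densities $\rho_\epsilon$, hence the operators $B_{\mathcal G}$ built from $\nabla V_{\rm e}(\cdot,\rho_\epsilon)$, to lie in a bounded set uniformly in $\epsilon$. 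Applying $M$ to both evolution operators in the identity and then invoking (\ref{LipV}) with $\rho_1 = \rho_\epsilon$, $\rho_2 = \rho$, and $\psi = U_{\mathcal G}^{\rho}(\tau,s)\phi$ yields, for each $\tau$,
$$
\bigl\|\bigl[V_{\rm e}(\tau,\rho_\epsilon) - V_{\rm e}(\tau,\rho)\bigr] U_{\mathcal G}^{\rho}(\tau,s)\phi\bigr\|_{H^1_0} \le C M \,\|\psi_\epsilon - \psi\|_{C(J;H^1_0)}\,\|\phi\|_{H^1_0}.
$$
Inserting this bound into the identity and estimating the interval length by $T_0$ gives
$$
\|U_{\mathcal G}^{\rho_\epsilon}(t,s)\phi - U_{\mathcal G}^{\rho}(t,s)\phi\|_{H^1_0} \le \frac{C M^2 T_0}{\hbar}\,\|\psi_\epsilon - \psi\|_{C(J;H^1_0)}\,\|\phi\|_{H^1_0},
$$
and taking the supremum over $\|\phi\|_{H^1_0}\le 1$ and over $t,s\in J$ produces the asserted operator-norm estimate of order $O(\|\psi_\epsilon - \psi\|_{C(J;H^1_0)})$.

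The main obstacle is the uniform-in-$\epsilon$ stability bound $M$: one must verify that the constants entering Kato's similarity/commutator estimate for the Galerkin generators do not degenerate as $\epsilon \to 0$. This reduces to showing that $\nabla V_{\rm e}(\cdot, \rho_\epsilon)$ stays bounded in $C(J;L^3)$ uniformly in $\epsilon$ — which follows from the $L^3$ estimate cited in Remark \ref{extensionofUGal} (Lemma 3.1 of \cite{archive}) applied to the convergent, hence bounded, family $\{\psi_\epsilon\}$ — together with the observation that the resulting semigroup stability constants depend on the potential only through such a bound. A secondary, routine point is the justification of differentiating the product $U_{\mathcal G}^{\rho_\epsilon}(t,\tau)\,U_{\mathcal G}^{\rho}(\tau,s)\phi$ under the dual/primal interpretation switch, handled exactly as in Remark \ref{tools}; everything else is the finite-dimensional, bounded-potential bookkeeping already set up in Theorem \ref{lacp}.
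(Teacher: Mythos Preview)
Your proposal is correct and follows essentially the same route as the paper: derive the Galerkin analog of the Duhamel identity (\ref{IDENTITY2}), apply the Lipschitz bound (\ref{LipV}), and invoke the uniform-in-$\epsilon$ boundedness of $U_{\mathcal G}^{\rho_\epsilon}$ coming from the commutator estimate (\ref{BG}). The paper's proof is terser---it simply states the identity (\ref{identity2}) and defers the uniform boundedness to the remark immediately following---but your expanded treatment of the stability constant $M$ and its dependence on $\nabla V_{\rm e}(\cdot,\rho_\epsilon)\in C(J;L^3)$ matches exactly what the paper records there.
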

\begin{proof}
The proof adapts the proof of \cite[Lemma
3.2]{archive}. The version of (\ref{IDENTITY2}) appropriate here is
the operator equation,
\begin{equation}
U_{{\mathcal G}}^{\rho_{1}}(t,s) - U_{{\mathcal G}}^{\rho_{2}}(t,s) =  
\frac{i}{\hbar}\int_{s}^{t} U_{{\mathcal G}}^{\rho_{1}}(t,r)
[V_{\rm e}(r, \rho_{1}) -
V_{\rm e}(r, \rho_{2})]U_{{\mathcal G}}^{\rho_{2}}(r,s)\; dr.
\label{identity2}
\end{equation}
If we identify $\rho_{1}$ with $\rho_{\epsilon}$ in the formula, and 
$\rho_{2}$ with $\rho$, then the convergence follows from (\ref{LipV}); 
note that 
$U_{{\mathcal G}}^{\rho_{\epsilon}}(t,s)$ is uniformly bounded in
$\epsilon$, as explained in the following remark. 
\end{proof}
\begin{remark}
The uniform boundedness of
$U_{{\mathcal G}}^{\rho_{\epsilon}}(t,s)$ can be derived from the
construction of the evolution operators (see \cite{J2}). Specifically, a
bound can be obtained  
by appropriately exponentiating 
a bound for (\ref{BG}) over the global time interval.
\end{remark}
\begin{proposition}
The numerical fixed point mapping $K_{n}$ is G\^{a}teaux differentiable
on $E_{n}$ with derivative,
\begin{equation}
K^{\prime}_{n}[\psi](\omega) = 
\frac{2i}{\hbar}\int_{0}^{t} 
U^{\rho}_{\mathcal G}(t,s)\left[\mbox{\rm Re}({\bar \psi} \omega) \ast W
\right] 
U^{\rho}_{\mathcal G}(s,0) Q_{n}\Psi_{0} \; ds.
\label{defKnprime}
\end{equation}
Here, $\psi$ and $\omega$ are arbitrary in $E_{n}$ and $\rho =
|\psi|^{2}$.
In fact, the derivative is continuous, hence $K_{n}$ is
continuously Fr\'{e}chet differentiable.
\end{proposition}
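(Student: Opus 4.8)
The plan is to mirror, in the finite-dimensional time-dependent setting, the argument that yields Theorem \ref{global} for $K$, with the Faedo-Galerkin evolution operators $U^{\rho}_{\mathcal G}$ playing the role of $U^{\rho}$. First I would fix $\psi,\omega \in E_{n}$ and compute the G\^{a}teaux difference quotient. Writing $\rho = |\psi|^{2}$ and $\rho_{\epsilon} = |\psi+\epsilon\omega|^{2} = \rho + 2\epsilon\,\mathrm{Re}(\bar\psi\omega) + \epsilon^{2}|\omega|^{2}$, Definition \ref{decomp2} together with Theorem \ref{lacp} gives $K_{n}(\psi+\epsilon\omega)(t) - K_{n}\psi(t) = [U^{\rho_{\epsilon}}_{\mathcal G}(t,0) - U^{\rho}_{\mathcal G}(t,0)]Q_{n}\Psi_{0}$. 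Inserting the operator identity (\ref{identity2}) with $\rho_{1} = \rho_{\epsilon}$, $\rho_{2} = \rho$, and using that $V_{\rm e}(s,\rho_{\epsilon}) - V_{\rm e}(s,\rho) = W \ast (\rho_{\epsilon}-\rho)$ depends only on $\rho_{\epsilon}-\rho$, I divide by $\epsilon$ to obtain
\[
\frac{K_{n}(\psi+\epsilon\omega)(t) - K_{n}\psi(t)}{\epsilon} = \frac{i}{\hbar}\int_{0}^{t} U^{\rho_{\epsilon}}_{\mathcal G}(t,s)\big[W \ast (2\,\mathrm{Re}(\bar\psi\omega) + \epsilon|\omega|^{2})\big] U^{\rho}_{\mathcal G}(s,0)Q_{n}\Psi_{0}\,ds.
\]

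Then I would pass to the limit $\epsilon \to 0$ inside the integral. The term $\epsilon\, W \ast |\omega|^{2}$ tends to zero in $C(J;H^{1}_{0})$ by the $H^{2}$-bound for the Hartree convolution (distribute the derivatives, then apply the general Young inequality, as noted after Definition \ref{reghyp}); $U^{\rho_{\epsilon}}_{\mathcal G}(t,s) \to U^{\rho}_{\mathcal G}(t,s)$ in the operator topology, uniformly in $t,s$, at rate $O(\|\epsilon\omega\|_{C(J;H^{1}_{0})})$ by Lemma \ref{uniform}; and the uniform boundedness in $\epsilon$ of $U^{\rho_{\epsilon}}_{\mathcal G}(t,s)$, together with the $H^{1}_{0} \to H^{1}_{0}$ boundedness of the multiplication operator $[W \ast \mathrm{Re}(\bar\psi\omega)]$ (again from the $H^{2}$-regularity of the convolution), yields a $t$-independent domination of the integrand and lets me take the limit under the integral sign, uniformly in $t \in J$. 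The limit is precisely (\ref{defKnprime}). That $K^{\prime}_{n}[\psi](\omega)$ again lies in $E_{n}$ follows from the invariance of $U^{\rho}_{\mathcal G}$ on $F_{n}$ and its extension in Remark \ref{extensionofUGal}: the integrand is $F_{n}$-valued and continuous in $s$, so the integral is in $F_{n}$ for each $t$ and continuous in $t$. Real-linearity of $\omega \mapsto K^{\prime}_{n}[\psi](\omega)$ is read off the formula, and boundedness from the same operator and Young estimates.

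For continuity of $\psi \mapsto K^{\prime}_{n}[\psi]$ in the uniform operator topology — which, by the classical criterion (a continuous G\^{a}teaux derivative is a Fr\'{e}chet derivative, with the same formula), finishes the proof — I would take $\psi_{1},\psi_{2} \in E_{n}$, set $\rho_{j} = |\psi_{j}|^{2}$, and split $K^{\prime}_{n}[\psi_{1}](\omega) - K^{\prime}_{n}[\psi_{2}](\omega)$ into three groups: in the first, $U^{\rho_{1}}_{\mathcal G}(t,s)$ is replaced by $U^{\rho_{2}}_{\mathcal G}(t,s)$; in the second, $\mathrm{Re}(\bar\psi_{1}\omega)$ by $\mathrm{Re}(\bar\psi_{2}\omega)$; in the third, $U^{\rho_{1}}_{\mathcal G}(s,0)$ by $U^{\rho_{2}}_{\mathcal G}(s,0)$. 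The first and third are controlled by Lemma \ref{uniform}, which bounds $\|U^{\rho_{1}}_{\mathcal G} - U^{\rho_{2}}_{\mathcal G}\|$ by $O(\|\psi_{1}-\psi_{2}\|_{C(J;H^{1}_{0})})$; the middle one by $\|W \ast \mathrm{Re}((\bar\psi_{1}-\bar\psi_{2})\omega)\|$, which the bilinear Hartree estimate (cf.\ (\ref{LipV})) bounds by $C\|\psi_{1}-\psi_{2}\|_{H^{1}_{0}}\|\omega\|_{H^{1}_{0}}$. Combined with the uniform bounds on all the evolution operators, this gives a local Lipschitz estimate $\|K^{\prime}_{n}[\psi_{1}] - K^{\prime}_{n}[\psi_{2}]\| \le C\|\psi_{1}-\psi_{2}\|_{C(J;H^{1}_{0})}$, hence continuity.

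I expect the principal obstacle to be justifying the passage to the limit under the integral sign \emph{uniformly in} $t \in J$, rather than merely pointwise in $t$: this requires coupling the global-in-time operator-norm convergence of Lemma \ref{uniform} with a domination of the integrand independent of $t$, for which the uniform boundedness of the family $\{U^{\rho_{\epsilon}}_{\mathcal G}\}$ and the $H^{1}_{0} \to H^{1}_{0}$ mapping property of $[W \ast \mathrm{Re}(\bar\psi\omega)]$ are both indispensable. Once those two uniform ingredients are secured, the algebra with (\ref{identity2}) and the three-term splitting are routine.
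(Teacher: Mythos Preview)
Your proposal is correct and follows essentially the same route as the paper: apply the Duhamel-type identity (\ref{identity2}) to $Q_{n}\Psi_{0}$, expand $\rho_{\epsilon}-\rho$, split off the $O(\epsilon)$ piece, and invoke Lemma \ref{uniform} for the convergence $U^{\rho_{\epsilon}}_{\mathcal G}\to U^{\rho}_{\mathcal G}$; then read continuity of $K_{n}'$ from the formula via Lemma \ref{uniform} and the Hartree multiplier bound (the paper cites \cite[Lemma 3.1]{archive} for the latter). Your write-up is in fact more explicit than the paper's---you spell out the $E_{n}$-membership of the derivative and the three-term splitting for continuity---but the underlying argument is the same.
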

\begin{proof}
Let $\psi$ be a given element of $E_{n}$ and set $\rho = |\psi|^{2}$.
Set
$$\rho_{\epsilon} =  
|\psi + \epsilon \omega|^{2}, \;  \mbox{for} \; \omega \in E_{n}, \;
\epsilon \in {\mathbb R}, \epsilon \not=0. $$
We will use (\ref{identity2}) as applied to $Q_{n} \Psi_{0}$:
\begin{eqnarray}
&U_{{\mathcal G}}^{\rho_{\epsilon}}Q_{n} \Psi_{0}(t) - 
U_{{\mathcal G}}^{\rho}Q_{n} \Psi_{0}(t) = \nonumber \\ 
&\frac{i}{\hbar}\int_{0}^{t} U_{{\mathcal G}}^{\rho_{\epsilon}}(t,s)
[V_{\rm e}(s, \rho_{\epsilon}) -
V_{\rm e}(s, \rho)]U_{{\mathcal G}}^{\rho}(s,0)Q_{n} \Psi_{0} \; ds.
\label{IDENTITY3}
\end{eqnarray}
This identity 
and the inequality (\ref{LipV}) are used as in \cite[sections 3.3-3.4]{archive}.
By direct calculation, we obtain
$$
\frac{U_{{\mathcal G}}^{\rho_{\epsilon}}Q_{n}\Psi_{0}(t) - 
U_{{\mathcal G}}^{\rho}Q_{n}\Psi_{0}(t)}{\epsilon} = 
\frac{2i}{\hbar}\int_{0}^{t} 
U_{{\mathcal G}}^{\rho_{\epsilon}}(t,s)
\left[\mbox{Re}({\bar \psi} \omega) \ast W \right]
U_{{\mathcal G}}^{\rho}(s,0)Q_{n}\Psi_{0} \; ds \; +
$$
$$
\frac{i\epsilon}{\hbar}\int_{0}^{t} 
U_{{\mathcal G}}^{\rho_{\epsilon}}(t,s)\left[|\omega|^{2} \ast W\right] 
U_{{\mathcal G}}^{\rho}(s,0) Q_{n}\Psi_{0} \; ds. 
$$
In \cite{archive}, this limit was computed in the $C(J; H^{1}_{0})$ topology.
Here, we may use the same topology. 
Specifically, the first term
converges to the derivative (cf.\ Lemma \ref{uniform}), 
and the second term converges to zero. 
Note that the multiplier of $\epsilon$ remains bounded.

The continuity of $K_{n}^{\prime}$ can be inferred directly from the
formula (\ref{defKnprime}) by using Lemma \ref{uniform} 
together 
with Lemma 3.1 of \cite{archive}.
\end{proof}
\subsection{The hypotheses of the Krasnosel'skii calculus: II}
\label{hypII}
We will prove 
convergence properties (\ref{projTncon}), (\ref{secondder}), 
and (\ref{conderTn}) in turn.
However, we begin with a fundamental proposition on the approximation of
$K$ by $K_{n}$. 
The following proposition is required for the proofs of 
Lemma \ref{Tncon} and 
Theorem
\ref{mainresult} to follow. 
\begin{proposition}
\label{KnapproximatesK}
If $\Psi$ is the unique fixed point of $K$, given by
$\Psi = U^{\rho}(t,0) \Psi_{0}$, then
$$
\|K_{n} P_{n} \Psi - K P_{n} \Psi \|_{C(J; H^{1}_{0})} \rightarrow 0, \;
\mbox{as} \;n \rightarrow \infty.
$$
In fact, the convergence is of the order,
$$
O(\|P_{n} \Psi - \Psi \|_{C(J; H^{1}_{0})}) + 
 O(\|Q_{n} \Psi_{0} - \Psi_{0} \|_{H^{1}_{0}}), 
$$
as $n \rightarrow \infty$.
\end{proposition}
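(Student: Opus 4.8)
The plan is to reduce the claim to a comparison of the two linear solves and then to a Duhamel identity between the full and the Faedo--Galerkin evolution operators. The key preliminary observation is that in Definitions \ref{decomp1} and \ref{decomp2} the first decoupling step yields the \emph{same} density when $K$ and $K_{n}$ are both evaluated at $P_{n}\Psi$, namely $\rho := |P_{n}\Psi|^{2}$; hence $K P_{n}\Psi = U^{\rho}(\cdot,0)\Psi_{0}$ while $K_{n} P_{n}\Psi = U_{\mathcal G}^{\rho}(\cdot,0)Q_{n}\Psi_{0}$, so the whole discrepancy is carried by the linear problems. I would then telescope through two intermediate functions,
\[
K_{n}P_{n}\Psi - K P_{n}\Psi = \big[U_{\mathcal G}^{\rho}(\cdot,0)Q_{n}\Psi_{0} - P_{n}U^{\rho}(\cdot,0)Q_{n}\Psi_{0}\big] + \big[(P_{n}-I)U^{\rho}(\cdot,0)Q_{n}\Psi_{0}\big] + \big[U^{\rho}(\cdot,0)(Q_{n}\Psi_{0}-\Psi_{0})\big],
\]
and estimate the three brackets separately.

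The last two brackets are routine. The third is $O(\|Q_{n}\Psi_{0}-\Psi_{0}\|_{H^{1}_{0}})$, uniformly in $t$, because $U^{\rho}(t,0)$ acts invariantly and boundedly on $H^{1}_{0}$, uniformly in $t$ (Lemma \ref{lacp}; Remark \ref{tools}). For the second, I would use the fixed point relation $\Psi = U^{\rho}(\cdot,0)\Psi_{0}$ to write $U^{\rho}(\cdot,0)Q_{n}\Psi_{0} = \Psi + U^{\rho}(\cdot,0)(Q_{n}\Psi_{0}-\Psi_{0})$; since $P_{n}$ acts pointwise in $t$ by $Q_{n}$ and $\|P_{n}\| = 1$, the second bracket is bounded in $C(J;H^{1}_{0})$ by $\|P_{n}\Psi-\Psi\|_{C(J;H^{1}_{0})} + 2\|U^{\rho}(\cdot,0)(Q_{n}\Psi_{0}-\Psi_{0})\|_{C(J;H^{1}_{0})}$, that is, by $O(\|P_{n}\Psi-\Psi\|_{C(J;H^{1}_{0})}) + O(\|Q_{n}\Psi_{0}-\Psi_{0}\|_{H^{1}_{0}})$.

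The first bracket is the heart of the matter, and here I would imitate the Duhamel argument of \cite[sect.\ 3]{archive}. Fix $t$ and differentiate $r \mapsto U_{\mathcal G}^{\rho}(t,r)\,U^{\rho}(r,0)Q_{n}\Psi_{0}$ in $r$, reading $U_{\mathcal G}^{\rho}$ through its extension in Remark \ref{extensionofUGal} and using the strong differentiability of both evolution families from Lemma \ref{lacp}; since $Q_{n}\Psi_{0}$ lies only in $H^{1}_{0}$, we have $U^{\rho}(\cdot,0)Q_{n}\Psi_{0} \in C^{1}(J;H^{-1})$, so this differentiation is performed in $H^{-1}$ and paired against $F_{n}$. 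Integrating from $0$ to $t$ gives
\[
U_{\mathcal G}^{\rho}(t,0)Q_{n}\Psi_{0} - Q_{n}U^{\rho}(t,0)Q_{n}\Psi_{0} = \frac{i}{\hbar}\int_{0}^{t} U_{\mathcal G}^{\rho}(t,r)\,\hat H(r)\,(I-Q_{n})\,U^{\rho}(r,0)Q_{n}\Psi_{0}\; dr,
\]
with $\hat H(r)$ the Hamiltonian of the frozen density $\rho$, and where under the integral $\hat H(r)(I-Q_{n})(\cdot)$ is understood as the functional it induces on $F_{n}$ --- equivalently, as $[\hat H(r) - {\mathcal G}(r)]$ acting on $F_{n}$-test functions. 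The crucial cancellation is that the kinetic contribution $-\tfrac{\hbar^{2}}{2m}\nabla^{2}(I-Q_{n})(\cdot)$, paired with any $v \in F_{n}$, equals $\tfrac{\hbar^{2}}{2m}\big((I-Q_{n})(\cdot),v\big)_{H^{1}_{0}} = 0$ by the defining $H^{1}_{0}$-orthogonality of $Q_{n}$ (this is Galerkin orthogonality; a lower-order term in the $H^{1}_{0}$ inner product, if present, is a bounded operator on $H^{1}_{0}$ and is absorbed below). What survives is the potential term $V_{\rm e}(r,\rho)\,(I-Q_{n})U^{\rho}(r,0)Q_{n}\Psi_{0}$, and by $\nabla V_{\rm e} \in C(J;L^{3})$ (Lemma 3.1 of \cite{archive}) together with the embedding $H^{1}_{0} \hookrightarrow L^{6}$, multiplication by $V_{\rm e}$ maps $H^{1}_{0}$ into $H^{1}_{0}$ with norm uniform in $t$ and $n$. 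As in Remark \ref{tools}, this is exactly what permits $U_{\mathcal G}^{\rho}(t,r)$ under the integral to be interpreted as acting invariantly on $H^{1}_{0}$, with $n$-independent norm, rather than merely on the dual; hence the $C(J;H^{1}_{0})$-norm of the first bracket is at most $C\,T_{0}\,\|(I-P_{n})U^{\rho}(\cdot,0)Q_{n}\Psi_{0}\|_{C(J;H^{1}_{0})}$, which is again $O(\|P_{n}\Psi-\Psi\|_{C(J;H^{1}_{0})}) + O(\|Q_{n}\Psi_{0}-\Psi_{0}\|_{H^{1}_{0}})$ by the decomposition used for the second bracket. Summing the three brackets gives the stated order, and convergence to $0$ follows from $\|P_{n}\Psi-\Psi\|_{C(J;H^{1}_{0})} \to 0$ (Proposition \ref{prop4.2}) and $\|Q_{n}\Psi_{0}-\Psi_{0}\|_{H^{1}_{0}} \to 0$ (the standing hypothesis on $Q_{n}$ in Definition \ref{projectionPn}).

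I expect the main obstacle to be the rigorous treatment of the first bracket: setting up the Duhamel identity with $U_{\mathcal G}^{\rho}$ correctly interpreted on dual data, and, above all, justifying that after the Galerkin cancellation of the kinetic term the residual potential term lets $U_{\mathcal G}^{\rho}(t,r)$ act on $H^{1}_{0}$ with a bound \emph{independent of $n$}. The finite-dimensionality of $F_{n}$ must not be invoked at that point, since the equivalence constants between norms on $F_{n}$ degrade as $n \to \infty$; the uniform bound has to come from the Kato-type similarity estimate for $U_{\mathcal G}$ --- the bound on $B_{\mathcal G}(t)$ in the proof of Lemma \ref{lacp}, which is already $n$-independent because it rests only on $\nabla V_{\rm e} \in L^{3}$. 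A secondary technical point is that, since $Q_{n}\Psi_{0}$ need not lie in $H^{2}$, the function $U^{\rho}(\cdot,0)Q_{n}\Psi_{0}$ has only $C^{1}(J;H^{-1})$ time regularity, so the differentiation producing the Duhamel identity must be carried out in the dual pairing against $F_{n}$, exactly as the analogous identities are handled in \cite{archive}.
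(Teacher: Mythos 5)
Your decomposition is, term for term, the one the paper uses: your third bracket is the paper's $U^{\rho_{n}}[\Psi_{0}-Q_{n}\Psi_{0}]$, and your first and second brackets are exactly the two terms of (\ref{twoterms}), so the architecture coincides. There is, however, one concrete misstep in your second bracket. You invoke ``the fixed point relation $\Psi=U^{\rho}(\cdot,0)\Psi_{0}$'' with your $\rho=|P_{n}\Psi|^{2}$; the fixed point relation holds for the density $|\Psi|^{2}$, not $|P_{n}\Psi|^{2}$, so $U^{\rho}(\cdot,0)\Psi_{0}$ is not $\Psi$. The repair is precisely the intermediate step the paper inserts in (\ref{33})--(\ref{last}): first trade $U^{|P_{n}\Psi|^{2}}$ for $U^{|\Psi|^{2}}$ at a cost of $O(\|P_{n}\Psi-\Psi\|_{C(J;H^{1}_{0})})$, using (\ref{IDENTITY2}) together with (\ref{LipV}) (i.e.\ Lemma 3.2 of \cite{archive}), and only then use $U^{|\Psi|^{2}}(\cdot,0)\Psi_{0}=\Psi$. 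The correction is of the claimed order and uses tools you already cite, so the conclusion stands, but the step as written is false.

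Where you genuinely diverge from the paper is the first bracket. The paper asserts that $[P_{n}U^{\rho_{n}}-U^{\rho_{n}}_{{\mathcal G}}]Q_{n}\Psi_{0}$ vanishes identically, via the representation (\ref{rep1}) and the claim that $U^{\rho_{n}}(t,s)$ acts invariantly on the orthogonal complement of $F_{n}$, so that $P_{n}$ annihilates the integral. You instead run the Duhamel identity with the opposite operator ordering ($U_{{\mathcal G}}$ outside, $U$ inside), cancel the principal part of $\hat H(r)(I-Q_{n})$ against $F_{n}$ test functions by the $H^{1}_{0}$-orthogonality of $Q_{n}$ (correctly noting the leftover $L^{2}$ term from the full $H^{1}_{0}$ inner product), and bound the surviving potential and lower-order contributions by $C\,T_{0}\,\|(I-P_{n})U^{\rho}(\cdot,0)Q_{n}\Psi_{0}\|_{C(J;H^{1}_{0})}$, which is then absorbed into the estimate already needed for the second bracket. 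This yields a nonzero but correctly-ordered bound rather than exact cancellation. It costs the extra care you rightly flag --- the $n$-independence of the bound on $U_{{\mathcal G}}$ must come from the Kato-type similarity estimate on $B_{{\mathcal G}}$ in (\ref{BG}), not from finite-dimensionality --- but it buys an argument that does not rest on the invariance of $U^{\rho_{n}}$ on the orthogonal complement of $F_{n}$. The final order of convergence is the same either way.
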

\begin{proof}
Set $\rho_{n} = |P_{n}\Psi|^{2}$.
By the definitions of $K$ and $K_{n}$, we have
$$
K P_{n} \Psi 
- K_{n} P_{n} \Psi 
= U^{\rho_{n}}\Psi_{0} - U^{\rho_{n}}_{{\mathcal G}}Q_{n} \Psi_{0}. 
$$
We write this difference as the sum,
$$
K P_{n} \Psi 
- K_{n} P_{n} \Psi 
= U^{\rho_{n}}[\Psi_{0} - Q_{n} \Psi_{0}] + 
[U^{\rho_{n}} - U^{\rho_{n}}_{{\mathcal G}}] Q_{n} \Psi_{0}. 
$$
By the boundedness of the evolution operators, the first term converges
to zero with order,
$$
 O(\|\Psi_{0} - Q_{n} \Psi_{0} \|_{H^{1}_{0}}). 
$$
We analyze the second difference, by writing it as the sum,
\begin{equation}
\label{twoterms}
[U^{\rho_{n}} - U^{\rho_{n}}_{{\mathcal G}}] Q_{n} \Psi_{0} 
= 
[P_{n} U^{\rho_{n}} - U^{\rho_{n}}_{{\mathcal G}}] Q_{n} \Psi_{0} 
+\;
(I -P_{n}) U^{\rho_{n}} Q_{n} \Psi_{0}. 
\end{equation}
The first rhs term is zero, as we now show. We will require the
following representation to show this:
\begin{eqnarray}
&P_{n}U^{\rho_{n}}(t,0)Q_{n} \Psi_{0} -U^{\rho_{n}}_{{\mathcal
G}}(t,0)Q_{n}\Psi_{0}  =  
P_{n}[U^{\rho_{n}}(t,0)Q_{n} \Psi_{0} -U^{\rho_{n}}_{{\mathcal
G}}(t,0)Q_{n}\Psi_{0}]\nonumber \\
&=\left(\frac{i}{\hbar} \right) P_{n} \int_{0}^{t} U^{\rho_{n}}(t,r)
[{\hat H}(\rho_{n}) - {\mathcal G}(\rho_{n})]
U^{\rho_{n}}_{{\mathcal G}}(r,0)Q_{n} \Psi_{0} \; dr. 
\label{rep1}
\end{eqnarray}
Now the difference,  
${\hat H}(\rho_{n}) - {\mathcal G}(\rho_{n})$,
for each fixed $t$,
is a continuous linear functional which vanishes on $F_{n}$.
We may therefore regard 
$U^{\rho_{n}}(t,s)$ as acting invariantly on the orthogonal complement of 
$F_{n}$ in $H^{1}_{0}$. Time integration preserves the invariance, and the
result is annihilated by $P_{n}$. 
We write the second term on the rhs of (\ref{twoterms}) as,
\begin{eqnarray}
(I -P_{n}) U^{\rho_{n}}(t,0) Q_{n} \Psi_{0} &=& 
(I -P_{n})[ U^{\rho_{n}}(t,0)- U^{\rho}(t,0)] Q_{n} \Psi_{0} \nonumber \\ 
&+& (I -P_{n}) U^{\rho}(t,0) Q_{n} \Psi_{0}.
\label{33}
\end{eqnarray}
According to \cite[Lemma 3.2]{archive}, the convergence of  
$\| U^{\rho_{n}}(t,0)- U^{\rho}(t,0)\|$ in the operator norm is of order,
$O(\|\Psi - P_{n} \Psi\|_{C(J;H^{1}_{0})})$.
The remaining term of (\ref{33}) is written as,
\begin{equation}
\label{last}
(I -P_{n}) U^{\rho}(t,0) Q_{n} \Psi_{0} =
(I -P_{n}) U^{\rho}(t,0) \Psi_{0} +
(I -P_{n}) U^{\rho}(t,0) (Q_{n} - I)\Psi_{0}. 
\end{equation}
The first term on the rhs of (\ref{last}) 
is rewritten as $(I - P_{n}) \Psi$; its
estimation is obvious. The second term is of order, 
$
 O(\|Q_{n} \Psi_{0} - \Psi_{0} \|_{H^{1}_{0}}). 
$
This completes the proof.
\end{proof}
\begin{lemma}
\label{Tncon}
The limit relation (\ref{projTncon}) holds in the case
$$
T_{n} \mapsto K_{n}, T \mapsto K, x_{0} \mapsto \Psi.
$$
More precisely,
\begin{equation}
\label{new6}
\|P_{n} K P_{n} \Psi - K_{n} P_{n} \Psi \|_{C(J; H^{1}_{0})}
\rightarrow 0, \; n \rightarrow \infty. 
\end{equation}
Here, $\Psi$ designates the unique fixed point of $K$ in $C(J;
H^{1}_{0})$, and the projection $P_{n}$ is defined in Definition
\ref{projectionPn}.
\end{lemma}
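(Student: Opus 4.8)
The plan is to reduce (\ref{new6}) directly to Proposition \ref{KnapproximatesK}, using the fact that the numerical fixed point map already takes its values in $E_{n}$. First I would observe that $K_{n}P_{n}\Psi \in E_{n}$ by the very construction of $K_{n}$ in Definition \ref{decomp2}, and that $P_{n}$ restricts to the identity on $E_{n}$ (since for $\psi \in E_{n}$ one has $\psi(\cdot,t)\in F_{n}$, hence $Q_{n}\psi(\cdot,t)=\psi(\cdot,t)$). Therefore $P_{n}K_{n}P_{n}\Psi = K_{n}P_{n}\Psi$, and the quantity to be estimated rewrites as
$$
P_{n}K P_{n}\Psi - K_{n}P_{n}\Psi = P_{n}\bigl[K P_{n}\Psi - K_{n}P_{n}\Psi\bigr].
$$

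Next I would invoke $\|P_{n}\|=1$ (which holds because $Q_{n}$ is an orthogonal projection applied pointwise in $t$, as in Definition \ref{projectionPn}) to get
$$
\|P_{n}K P_{n}\Psi - K_{n}P_{n}\Psi\|_{C(J;H^{1}_{0})} \le \|K P_{n}\Psi - K_{n}P_{n}\Psi\|_{C(J;H^{1}_{0})}.
$$
The right-hand side is precisely the quantity controlled by Proposition \ref{KnapproximatesK}, which not only shows it tends to zero but gives the explicit rate $O(\|P_{n}\Psi-\Psi\|_{C(J;H^{1}_{0})}) + O(\|Q_{n}\Psi_{0}-\Psi_{0}\|_{H^{1}_{0}})$. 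Chaining the two displays establishes (\ref{projTncon}) in the stated case, and it is worth recording that (\ref{new6}) inherits this same rate, which will be needed when the two-sided estimate (\ref{xbnd}) is assembled in the proof of Theorem \ref{mainresult}.

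I do not expect a genuine obstacle here: once one notices that $P_{n}$ is idempotent of unit norm and that $K_{n}$ has range in $E_{n}$, the lemma is a one-line consequence of Proposition \ref{KnapproximatesK}. All the real work — comparing the exact evolution operator $U^{\rho_{n}}$ with the Faedo--Galerkin evolution operator $U^{\rho_{n}}_{\mathcal G}$, isolating the term that $P_{n}$ annihilates, and bounding the remainder by the projection errors $\|(I-P_{n})\Psi\|$ and $\|(I-Q_{n})\Psi_{0}\|$ via the identity (\ref{identity2}) and the Lipschitz bound (\ref{LipV}) — has already been carried out in the proof of that proposition. The only point deserving a little care is the bookkeeping that keeps the rate of convergence visible through the final triangle-inequality step.
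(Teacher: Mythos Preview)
Your argument is correct, and it is genuinely shorter than the paper's. The paper does not exploit the observation $P_{n}K_{n}P_{n}\Psi = K_{n}P_{n}\Psi$; instead it inserts $K\Psi$ as a pivot and splits
\[
\|P_{n}KP_{n}\Psi - K_{n}P_{n}\Psi\| \le \|P_{n}KP_{n}\Psi - K\Psi\| + \|K\Psi - KP_{n}\Psi\| + \|KP_{n}\Psi - K_{n}P_{n}\Psi\|,
\]
handling the three terms by (\ref{new4}), the local Lipschitz property of $K$, and Proposition \ref{KnapproximatesK} respectively. Your route bypasses the first two terms entirely by pulling $P_{n}$ outside and using $\|P_{n}\|=1$, so that the whole lemma collapses to a single invocation of Proposition \ref{KnapproximatesK}. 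Both arguments deliver the same rate $O(\|P_{n}\Psi-\Psi\|_{C(J;H^{1}_{0})}) + O(\|Q_{n}\Psi_{0}-\Psi_{0}\|_{H^{1}_{0}})$, since the two extra terms in the paper's decomposition are each $O(\|P_{n}\Psi-\Psi\|)$ anyway; your version simply gets there with less overhead.
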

\begin{proof}
We begin with the triangle inequality, and estimate the resulting terms.
$$
\|P_{n} K P_{n} \Psi - K_{n} P_{n} \Psi \|_{C(J; H^{1}_{0})} \leq
\|P_{n} K P_{n} \Psi - K \Psi \|_{C(J; H^{1}_{0})} + 
$$
$$
\|K \Psi - K P_{n} \Psi \|_{C(J; H^{1}_{0})} \;+ 
\|K P_{n} \Psi - K_{n} P_{n} \Psi \|_{C(J; H^{1}_{0})}. 
$$
The first term has already been estimated. It is relation (\ref{new4}), 
as proved in Proposition \ref{prop4.2}.
The third term is estimated by Proposition
\ref{KnapproximatesK}. The second term is estimated by the local Lipschitz
continuity of $K$. This concludes the proof. 
\end{proof}
\begin{lemma}
\label{LF}
The limit relation 
(\ref{secondder}) 
holds, with the same identifications
as the previous lemma. Specifically,
\begin{equation}
\label{new7}
\|[K_{n}^{\prime} - P_{n}K^{\prime}](P_{n} \Psi)\| \rightarrow 0, \;
\mbox{as} \; n \rightarrow \infty.
\end{equation}
\end{lemma}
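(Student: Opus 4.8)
The plan is to argue directly from the explicit derivative formulas (\ref{defKprime}) and (\ref{defKnprime}), keeping the density frozen at $\rho_{n}=|P_{n}\Psi|^{2}$, and to reduce $[K_{n}^{\prime}-P_{n}K^{\prime}](P_{n}\Psi)$ to the configuration already handled in Proposition \ref{KnapproximatesK}. Fix $\omega\in E_{n}$ with $\|\omega\|_{C(J;H^{1}_{0})}\le1$ and write $g(s)=\mathrm{Re}\big(\overline{P_{n}\Psi(\cdotp,s)}\,\omega(\cdotp,s)\big)\ast W$, $\chi^{n}(s)=U^{\rho_{n}}_{\mathcal G}(s,0)Q_{n}\Psi_{0}=(K_{n}P_{n}\Psi)(s)\in F_{n}$, and $\Phi^{n}(s)=U^{\rho_{n}}(s,0)\Psi_{0}=(KP_{n}\Psi)(s)$. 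First I would record three uniform facts: (i) distributing the Laplacian across the convolution and invoking the general Young inequality (with $W$ of compact support) together with $H^{1}_{0}\hookrightarrow L^{4}$, $g$ lies in $C(J;H^{2})$ with $H^{2}$-norm, hence multiplier norm on $H^{1}_{0}$, bounded by a constant independent of $n$ and of $\omega$ in the unit ball; (ii) $U^{\rho_{n}}(t,s)$ and $U^{\rho_{n}}_{\mathcal G}(t,s)$ are bounded on $H^{1}_{0}$, resp.\ $F_{n}$, uniformly in $n,t,s$ (exponentiate a bound for (\ref{BG}); cf.\ the remark after Lemma \ref{uniform}); (iii) $\|\chi^{n}-\Phi^{n}\|_{C(J;H^{1}_{0})}=\|K_{n}P_{n}\Psi-KP_{n}\Psi\|_{C(J;H^{1}_{0})}\to0$ by Proposition \ref{KnapproximatesK}, and $\Phi^{n}=KP_{n}\Psi\to K\Psi=\Psi$, $P_{n}\Psi\to\Psi$ in $C(J;H^{1}_{0})$ (local Lipschitz continuity of $K$, Theorem \ref{global}).

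Next I would extract the algebraic identity. By the extension convention of Remark \ref{extensionofUGal}, $U^{\rho_{n}}_{\mathcal G}(t,s)$ precomposes with $Q_{n}$; and the computation behind the ``first rhs term is zero'' step in the proof of Proposition \ref{KnapproximatesK} gives, more generally, $U^{\rho_{n}}_{\mathcal G}(t,s)\zeta=P_{n}U^{\rho_{n}}(t,s)\zeta$ for every $\zeta\in F_{n}$ (replace $Q_{n}\Psi_{0}$ by $\zeta$ in (\ref{rep1}); $\hat H(\rho_{n})-\mathcal G(\rho_{n})$ carries $F_{n}$ into the orthogonal complement of $F_{n}$ in $H^{1}_{0}$, which $U^{\rho_{n}}$ leaves invariant and $P_{n}$ annihilates). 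Using this, together with $(P_{n}K)^{\prime}(P_{n}\Psi)=P_{n}K^{\prime}(P_{n}\Psi)$ and commuting $P_{n}$ under the Bochner integral, one obtains
\[
[K_{n}^{\prime}-P_{n}K^{\prime}](P_{n}\Psi)(\omega)(t)=\frac{2i}{\hbar}\int_{0}^{t}P_{n}U^{\rho_{n}}(t,s)\Big(Q_{n}\big[g(s)(\chi^{n}(s)-\Phi^{n}(s))\big]-(I-Q_{n})\big[g(s)\Phi^{n}(s)\big]\Big)\,ds.
\]
By (i)--(iii) the first term is bounded in $C(J;H^{1}_{0})$ by a constant multiple of $\|\chi^{n}-\Phi^{n}\|_{C(J;H^{1}_{0})}$, so it tends to $0$ uniformly over $\|\omega\|\le1$.

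The crux is the second term, which I would bound by a constant multiple of $\sup_{s\in J}\|(I-Q_{n})[g(s)\Phi^{n}(s)]\|_{H^{1}_{0}}$. Here the bare property $\|Q_{n}f-f\|_{H^{1}_{0}}\to0$ is not enough, since $g(s)\Phi^{n}(s)$ carries only $H^{1}_{0}$-regularity; instead I would show that the set $\mathcal S=\{\,g(s)\Phi^{n}(s):n\ge1,\ s\in J,\ \|\omega\|_{C(J;H^{1}_{0})}\le1\,\}$ is relatively compact in $H^{1}_{0}$, so that $\sup_{\xi\in\mathcal S}\|(I-Q_{n})\xi\|_{H^{1}_{0}}\to0$ (pointwise convergence $I-Q_{n}\to0$ plus $\|Q_{n}\|=1$ force uniform convergence on compacta). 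For this, $\mathcal S$ is contained in the image of $\mathcal P_{1}\times\mathcal B\times\mathcal P_{2}$ under $(\psi,v,\phi)\mapsto\big(W\ast\mathrm{Re}(\bar\psi v)\big)\phi$, where $\mathcal B$ is the unit ball of $H^{1}_{0}$ and $\mathcal P_{1}=\overline{\bigcup_{n}P_{n}\Psi(J)}$, $\mathcal P_{2}=\overline{\bigcup_{n}\Phi^{n}(J)}$ are compact in $H^{1}_{0}$ (closures of images of the $C(J;H^{1}_{0})$-convergent curves $P_{n}\Psi\to\Psi$, $\Phi^{n}\to\Psi$): the map $v\mapsto\mathrm{Re}(\bar\psi v)$ is, uniformly for $\psi$ in a compact subset of $H^{1}_{0}$, compact from bounded subsets of $H^{1}_{0}$ into $L^{2}$ (compact embedding $H^{1}_{0}\hookrightarrow L^{4}$ and $\|\mathrm{Re}(\bar\psi v)\|_{L^{2}}\le\|\psi\|_{L^{4}}\|v\|_{L^{4}}$), convolution with $W$ is bounded $L^{2}\to H^{2}$, and multiplication $H^{2}\times\mathcal P_{2}\to H^{1}_{0}$ is continuous. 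A diagonal extraction confirms it: from $s_{k}\to s_{*}$, $P_{n_{k}}\Psi(s_{k})\to\psi_{*}$ in $H^{1}_{0}$, $\omega_{k}(s_{k})\rightharpoonup v_{*}$ in $H^{1}_{0}$ (hence strongly in $L^{4}$), and $\Phi^{n_{k}}(s_{k})\to\phi_{*}$ in $H^{1}_{0}$, one reads off $g_{k}(s_{k})\Phi^{n_{k}}(s_{k})\to\big(W\ast\mathrm{Re}(\bar\psi_{*}v_{*})\big)\phi_{*}$ in $H^{1}_{0}$. Adding the two estimates yields (\ref{new7}).

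I expect the main obstacle to be precisely this last point: recognizing that the genuinely unsmoothed residual $(I-Q_{n})[g(s)\Phi^{n}(s)]$, which is \emph{not} controlled by the defining approximation property of $\{P_{n}\}$ alone, is nevertheless tamed because the Hartree coupling supplies --- through the compactness of $H^{1}_{0}\hookrightarrow L^{4}$ --- exactly the compactness needed to make it uniformly small. This is an unconditional incarnation of the regularity heuristic behind Definition \ref{reghyp}; in contrast to the proof of (\ref{firstder}), the present lemma does not invoke the regularization hypothesis. A secondary care point is the bookkeeping of the extension convention (Remark \ref{extensionofUGal}) and the $F_{n}$-annihilator argument, so that the ``$P_{n}$ annihilates it'' step of Proposition \ref{KnapproximatesK} is applied to the right object $Q_{n}[g(s)\chi^{n}(s)]\in F_{n}$.
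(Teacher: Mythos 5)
Your reduction is algebraically equivalent to the paper's: after invoking the operator identity $U^{\rho_n}_{\mathcal G}(t,s)=P_nU^{\rho_n}(t,s)Q_n$ (which you, like the paper, extract from the ``first rhs term is zero'' step of Proposition \ref{KnapproximatesK}), your two residuals $Q_n[g(\chi^n-\Phi^n)]$ and $(I-Q_n)[g\,\Phi^n]$ recombine exactly into the paper's three terms (its first term plus $d_1$, and its $d_2$, respectively). The genuine divergence is in the treatment of the second residual. The paper disposes of $d_2$ by asserting it vanishes identically, on the grounds that $P_nU^{\rho_n}(t,s)-U^{\rho_n}_{\mathcal G}(t,s)$ annihilates both $F_n$ and its orthogonal complement --- the latter because $U^{\rho_n}(t,s)$ is claimed to leave $F_n^{\perp}$ invariant. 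You instead keep $(I-Q_n)[g(s)\Phi^n(s)]$ and kill it by a compactness argument: the Hartree convolution smooths $\mathrm{Re}(\bar\psi\omega)$ into a relatively compact subset of $H^2$ (via the compact embedding $H^1_0\hookrightarrow L^4$), so the relevant set is precompact in $H^1_0$ and $I-Q_n\to0$ uniformly on it. This is sound, correctly identified as the crux, and, as you note, does not require the regularization hypothesis of Definition \ref{reghyp} --- consistent with the paper's proof, which also does not invoke it here. What your route buys is independence from a second appeal to the $F_n^{\perp}$-invariance of the full evolution operator, the most delicate assertion in the paper's argument; what it costs is the explicit rate: the paper obtains (\ref{new7}) with order $O(\|\Psi-P_n\Psi\|_{C(J;H^1_0)})+O(\|\Psi_0-Q_n\Psi_0\|_{H^1_0})$, whereas your compactness step is rate-free. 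Since Theorem \ref{ther2} only requires (\ref{secondder}) to tend to zero, and the convergence rate in Theorem \ref{mainresult} is governed solely by the terms of Remark \ref{remark2.1}, this loss is immaterial to the main result.
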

\begin{proof}
To clarify the notation, set $\psi_{n} = P_{n} \Psi, \rho_{n} =
|\psi_{n}|^{2}$. For
arbitrary $\omega \in E_{n}, \|\omega\|_{C(J;H^{1}_{0})} \leq 1$, 
we write the difference of
$P_{n}K^{\prime}[\psi_{n}](\omega)$ and 
$K^{\prime}_{n}[\psi_{n}](\omega)$ as: 
$$
P_{n} K^{\prime}[\psi_{n}](\omega) -
K^{\prime}_{n}[\psi_{n}](\omega) =
$$
$$
  \frac{2i}{\hbar}\int_{0}^{t} 
P_{n} U^{\rho_{n}}(t,s)\left[\mbox{Re}({\bar \psi_{n}} \omega) \ast W \right]
 U^{\rho_{n}}(s,0) (I - Q_{n})\Psi_{0} \; ds \;+
$$
$$
  \frac{2i}{\hbar}\int_{0}^{t} 
 P_{n} U^{\rho_{n}}(t,s)\left[\mbox{Re}({\bar \psi_{n}} \omega) \ast W \right]
 U^{\rho_{n}}(s,0) Q_{n}\Psi_{0} \; ds \;-
$$
$$
  \frac{2i}{\hbar}\int_{0}^{t} 
U^{\rho_{n}}_{{\mathcal G}}(t,s)\left[\mbox{Re}({\bar \psi_{n}} \omega) \ast W
\right]
U^{\rho_{n}}_{{\mathcal G}}(s,0) Q_{n} \Psi_{0} \; ds.
$$
Note that we commuted $Q_{n}$ and $\int_{0}^{t}$, and then replaced
$Q_{n}$ by $P_{n}$.
Because of the 
boundedness of the evolution operator on $H^{1}_{0}$, the
first term is estimated by a constant times $\|\Psi_{0} - Q_{n} \Psi_{0}
\|_{H^{1}_{0}}$. Here, we directly use \cite[Lemma 3.1]{archive}.
The indicated difference between the second and third terms  
can be written as the sum of the two differences,
$d_{1}, d_{2}$, where $d_{1} =$
$$
  \frac{2i}{\hbar}\int_{0}^{t} 
P_{n}U^{\rho_{n}}(t,s)\left[\mbox{Re}({\bar \psi_{n}} \omega) \ast W \right]
( U^{\rho_{n}}(s,0)-U^{\rho_{n}}_{{\mathcal G}}(s,0))Q_{n} \Psi_{0} \; ds, 
$$
and where $d_{2} =$
$$
  \frac{2i}{\hbar}\int_{0}^{t} 
[P_{n}U^{\rho_{n}}(t,s)-
U^{\rho_{n}}_{{\mathcal G}}(t,s)]
\left[\mbox{Re}({\bar \psi_{n}} \omega) \ast W \right]
U^{\rho_{n}}_{{\mathcal G}}(s,0)Q_{n}\Psi_{0} \; ds.
$$
Now $d_{1}$ can be estimated by using the proof of Proposition 
\ref{KnapproximatesK}, beginning with (\ref{twoterms}), where it was shown that 
$$
\|(U^{\rho_{n}}-U^{\rho_{n}}_{{\mathcal G}})Q_{n} \Psi_{0}
\|_{C(J;H^{1}_{0})} \rightarrow 0, \; n \rightarrow \infty, 
$$
with order of convergence, 
$O(\|\Psi - P_{n} \Psi\|_{C(J;H^{1}_{0})}) + O(\|\Psi_{0} - Q_{n} \Psi_{0}
\|_{H^{1}_{0}})$.
This is maintained for $d_{1}$ via \cite[Lemma 3.1]{archive}.
Also, $d_{2} = 0$. We show this by writing the term as the sum, 
$$
  \frac{2i}{\hbar}\int_{0}^{t} 
[P_{n}U^{\rho_{n}}(t,s)-
U^{\rho_{n}}_{{\mathcal G}}(t,s)]
\left[\mbox{Re}({\bar \psi_{n}} \omega) \ast W \right]
U^{\rho_{n}}_{{\mathcal G}}(s,0)Q_{n}\Psi_{0} \; ds =
$$
$$
  \frac{2i}{\hbar}\int_{0}^{t} 
[P_{n}U^{\rho_{n}}(t,s)-
U^{\rho_{n}}_{{\mathcal G}}(t,s)]
P_{n}\left[\mbox{Re}({\bar \psi_{n}} \omega) \ast W \right]
U^{\rho_{n}}_{{\mathcal G}}(s,0)Q_{n}\Psi_{0} \; ds +
$$
$$
  \frac{2i}{\hbar}\int_{0}^{t} 
[P_{n}U^{\rho_{n}}(t,s)-
U^{\rho_{n}}_{{\mathcal G}}(t,s)]
(I - P_{n})\left[\mbox{Re}({\bar \psi_{n}} \omega) \ast W \right]
U^{\rho_{n}}_{{\mathcal G}}(s,0)Q_{n}\Psi_{0} \; ds. 
$$
Since the operator,
$$
P_{n}U^{\rho_{n}}(t,s)-
U^{\rho_{n}}_{{\mathcal G}}(t,s),
$$
vanishes on $E_{n}$ and on the subspace, which, for each fixed time, is
the orthogonal complement of $F_{n}$, we draw the conclusion that $d_{2} =
0$. 

This concludes the proof.
\end{proof}
The proof of this result allows a generalization to the case where $\psi_{n}$ 
is in a closed $E_{n}$ neighborhood of $P_{n} \Psi$. This will be required
for the verification of the hypothesis (\ref{new8}) to follow.
\begin{corollary}
\label{LFG}
Let $\epsilon > 0$ be specified. Then there is a real number
$\delta_{\epsilon} > 0$ and an integer $n_{\epsilon}$ such that, for
$$
\|\psi_{n} - P_{n} \Psi \|_{C(J; H^{1}_{0})} \leq \delta_{\epsilon}, \;
\psi_{n} \in E_{n}, \; n \geq n_{\epsilon},
$$
then
\begin{equation}
\label{alt7}
\|[K_{n}^{\prime} - P_{n}K^{\prime}](\psi_{n})\| \leq \epsilon. 
\end{equation}
\end{corollary}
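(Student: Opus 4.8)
Here is the approach I would take. The statement is a local, uniform version of Lemma \ref{LF}, and the plan is to reduce it to that lemma plus the uniform Lipschitz dependence of the evolution operators on the density. Write $\sigma_{n}=|P_{n}\Psi|^{2}$ and, for $\psi_{n}\in E_{n}$, $\rho_{n}=|\psi_{n}|^{2}$. First I would reread the proof of Lemma \ref{LF} and note that its structure survives verbatim when $P_{n}\Psi$ is replaced by an arbitrary $\psi_{n}\in E_{n}$: the decomposition of $[K_{n}^{\prime}-P_{n}K^{\prime}](\psi_{n})(\omega)$ into (i) a term bounded by a constant times $\|\Psi_{0}-Q_{n}\Psi_{0}\|_{H^{1}_{0}}$, (ii) the term $d_{1}$, and (iii) the term $d_{2}$, uses only that $\psi_{n}\in E_{n}$. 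In particular the annihilation argument giving $d_{2}=0$ — namely that $P_{n}U^{\rho_{n}}(t,s)-U^{\rho_{n}}_{\mathcal G}(t,s)$ vanishes on $E_{n}$ and, for each fixed $s$, on the orthogonal complement of $F_{n}$ in $H^{1}_{0}$ — holds for every such $\psi_{n}$, and $d_{1}$ is controlled (via \cite[Lemma 3.1]{archive} and boundedness of the evolution operators) by $\|(U^{\rho_{n}}-U^{\rho_{n}}_{\mathcal G})Q_{n}\Psi_{0}\|_{C(J;H^{1}_{0})}$. So the whole matter reduces to bounding this last quantity uniformly over the $\delta_{\epsilon}$-ball.

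For that I would insert $\sigma_{n}$ by the triangle inequality,
$$
\|(U^{\rho_{n}}-U^{\rho_{n}}_{\mathcal G})Q_{n}\Psi_{0}\|
\le \|(U^{\rho_{n}}-U^{\sigma_{n}})Q_{n}\Psi_{0}\|
+\|(U^{\sigma_{n}}-U^{\sigma_{n}}_{\mathcal G})Q_{n}\Psi_{0}\|
+\|(U^{\sigma_{n}}_{\mathcal G}-U^{\rho_{n}}_{\mathcal G})Q_{n}\Psi_{0}\|,
$$
with norms in $C(J;H^{1}_{0})$. The middle term is precisely the quantity estimated inside the proof of Lemma \ref{LF} (equivalently, via \eqref{twoterms} and Proposition \ref{KnapproximatesK}): it is of order $O(\|\Psi-P_{n}\Psi\|_{C(J;H^{1}_{0})})+O(\|\Psi_{0}-Q_{n}\Psi_{0}\|_{H^{1}_{0}})$, hence tends to $0$ as $n\to\infty$, independently of $\psi_{n}$. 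The first and third terms are governed by the Lipschitz dependence of the full and the Faedo–Galerkin evolution operators on the density: using \eqref{IDENTITY2}, \eqref{identity2} and \eqref{LipV} — this is \cite[Lemma 3.2]{archive} for $U^{\rho}$ and Lemma \ref{uniform} for $U^{\rho}_{\mathcal G}$ — each is $O(\|\psi_{n}-P_{n}\Psi\|_{C(J;H^{1}_{0})})\le O(\delta_{\epsilon})$. To finish: given $\epsilon>0$, invoke Lemma \ref{LF} (the $\psi_{n}=P_{n}\Psi$ case) and the convergences $\|\Psi-P_{n}\Psi\|\to0$, $\|\Psi_{0}-Q_{n}\Psi_{0}\|\to0$ to pick $n_{\epsilon}$ so that for $n\ge n_{\epsilon}$ the $\psi_{n}$-independent contributions sum to at most $\epsilon/2$; then choose $\delta_{\epsilon}>0$ so small that the $O(\delta_{\epsilon})$ Lipschitz terms sum to at most $\epsilon/2$. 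Adding the bounds yields \eqref{alt7}.

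The step requiring the most care — and the one I would expect to be the main obstacle — is the uniformity of all implied constants, both in $n$ and over the ball. The Lipschitz constants coming from \eqref{LipV}, the $L^{3}$ bound on $\nabla V_{\rm e}$ from \cite[Lemma 3.1]{archive}, the bound on the convolution multiplier $\mathrm{Re}(\bar\psi_{n}\omega)\ast W$, and the uniform operator bounds for $U^{\rho_{n}}$ and $U^{\rho_{n}}_{\mathcal G}$ obtained by exponentiating the estimate for \eqref{BH}/\eqref{BG} over $J$, all depend only on $\|\psi_{n}\|_{C(J;H^{1}_{0})}$ and on the fixed data $(V,\Psi_{0},T_{0})$. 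Since $\|\psi_{n}\|_{C(J;H^{1}_{0})}\le\|\Psi\|_{C(J;H^{1}_{0})}+\delta_{\epsilon}$ on the ball, these constants are bounded uniformly in $n$ and in $\psi_{n}$, so the $O(\delta_{\epsilon})$ estimates are genuinely uniform and the choice of $\delta_{\epsilon}$ after $n_{\epsilon}$ is legitimate. Once that bookkeeping is in place, the corollary follows. Finally, this localized estimate is exactly the ingredient needed to verify hypothesis \eqref{conderTn} (labelled \eqref{new8} in the sequel) of the Krasnosel'skii calculus, since $K_{n}^{\prime}(x)-K_{n}^{\prime}(P_{n}\Psi)=\big([K_{n}^{\prime}-P_{n}K^{\prime}](x)-[K_{n}^{\prime}-P_{n}K^{\prime}](P_{n}\Psi)\big)+P_{n}\big(K^{\prime}(x)-K^{\prime}(P_{n}\Psi)\big)$, with the first bracket controlled by Corollary \ref{LFG} applied at $x$ and at $P_{n}\Psi$, and the second by continuity of $K^{\prime}$.
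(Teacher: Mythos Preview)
Your proposal is correct and follows essentially the same strategy as the paper: carry the decomposition of Lemma \ref{LF} over to arbitrary $\psi_{n}\in E_{n}$, observe that the first term and $d_{2}=0$ go through unchanged, and reduce everything to controlling $\|(U^{\rho_{n}}-U^{\rho_{n}}_{\mathcal G})Q_{n}\Psi_{0}\|$ by comparison with the $\sigma_{n}=|P_{n}\Psi|^{2}$ case via Lipschitz continuity of the evolution operators.

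The only organizational difference is in how that last quantity is split. You use a direct three-term triangle inequality $U^{\rho_{n}}-U^{\rho_{n}}_{\mathcal G}=(U^{\rho_{n}}-U^{\sigma_{n}})+(U^{\sigma_{n}}-U^{\sigma_{n}}_{\mathcal G})+(U^{\sigma_{n}}_{\mathcal G}-U^{\rho_{n}}_{\mathcal G})$, which costs two Lipschitz estimates (one for the full evolution operator, one for the Galerkin one via Lemma \ref{uniform}). The paper instead re-invokes the projection split \eqref{twoterms} for the new $\rho_{n}$, so that $[P_{n}U^{\rho_{n}}-U^{\rho_{n}}_{\mathcal G}]Q_{n}\Psi_{0}=0$ by the annihilation argument, and only $(I-P_{n})U^{\rho_{n}}Q_{n}\Psi_{0}$ remains; this is then compared with $(I-P_{n})U^{\sigma_{n}}Q_{n}\Psi_{0}$ using a single Lipschitz estimate for the full evolution operator. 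Your route is perhaps more transparent, the paper's slightly more economical; both are valid, and your attention to the uniformity of the constants in $n$ and over the ball is exactly the point that makes either argument go through.
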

\begin{proof}
We telescope the representations of the previous proof as follows. Note
that $\psi_{n}$ and $\rho_{n}$ have the new interpretation as specified in
the corollary. 
$$
P_{n} K^{\prime}[\psi_{n}](\omega) -
K^{\prime}_{n}[\psi_{n}](\omega) =
$$
$$
  \frac{2i}{\hbar}\int_{0}^{t} 
P_{n} U^{\rho_{n}}(t,s)\left[\mbox{Re}({\bar \psi_{n}} \omega) \ast W \right]
 U^{\rho_{n}}(s,0) (I - Q_{n})\Psi_{0} \; ds \;+
$$
$$
  \frac{2i}{\hbar}\int_{0}^{t} 
P_{n}U^{\rho_{n}}(t,s)\left[\mbox{Re}({\bar \psi_{n}} \omega) \ast W \right]
( U^{\rho_{n}}(s,0)-U^{\rho_{n}}_{{\mathcal G}}(s,0))Q_{n} \Psi_{0} \; ds
\; + 
$$
$$
  \frac{2i}{\hbar}\int_{0}^{t} 
[P_{n}U^{\rho_{n}}(t,s)-
U^{\rho_{n}}_{{\mathcal G}}(t,s)]
\left[\mbox{Re}({\bar \psi_{n}} \omega) \ast W \right]
U^{\rho_{n}}_{{\mathcal G}}(s,0)Q_{n}\Psi_{0} \; ds.
$$
As before, the first term is estimated by a constant times
$\|\Psi_{0} - Q_{n} \Psi_{0}\|_{H^{1}_{0}}$, and the third term is zero. 
The only difference is 
the second term,  which reduces to a study of the term, 
$$
\|(U^{\rho_{n}}-U^{\rho_{n}}_{{\mathcal G}})Q_{n} \Psi_{0}
\|_{C(J;H^{1}_{0})}, 
$$
with the new interpretation of $\rho_{n}$.
This requires a new analysis of the second term on the rhs of 
(\ref{twoterms}) which we rewrite as
follows. 
$$
(I -P_{n}) U^{\rho_{n}} Q_{n} \Psi_{0} = 
[(I -P_{n}) U^{\rho_{n}} Q_{n} \Psi_{0} - 
(I -P_{n}) U^{|P_{n} \Psi|^{2}} Q_{n} \Psi_{0}] + 
$$
$$
(I -P_{n}) U^{|P_{n} \Psi|^{2}} Q_{n} \Psi_{0}. 
$$
When the triangle inequality is applied, (\ref{last}) leads to 
an estimate, previously derived, for the final term.
The difference expression is a term of order 
$O(\|\psi_{n} - {P}_{n}\Psi\|_{C(J; H^{1}_{0}})$, which will satisfy a bound of
$\epsilon/2$ by  
choice of $\delta_{\epsilon}$. Now
$n_{\epsilon}$ can be chosen according to the remaining terms, so that an
additional $\epsilon/2$ is added to the estimate. 
This concludes the proof.
\end{proof}
\begin{lemma}
\label{LF2}
The limit relation 
(\ref{conderTn}) 
holds, with the same identifications
as the previous lemmas.
Specifically, for any $\epsilon > 0$,
there exist $n_{\epsilon}$ and $\delta_{\epsilon} > 0$ such that 
\begin{equation}
\label{new8}
\| {K}^{'}_{n}(\psi) - {K}^{'}_{n}({P}_{n}\Psi) \| \leq \epsilon
\;\;\; \mbox{for} 
\;\;\; (n \geq n_{\epsilon}; \; \|\psi - {P}_{n}\Psi\| \leq
\delta_{\epsilon},  \; \psi \in E_{n}).  
\end{equation}
\end{lemma}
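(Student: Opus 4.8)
The plan is to reduce (\ref{new8}) to results already established by interposing the operator $P_{n}K^{\prime}$ between $K_{n}^{\prime}(\psi)$ and $K_{n}^{\prime}(P_{n}\Psi)$ and applying the triangle inequality in the uniform operator topology on $E_{n}$. Explicitly, for $\psi \in E_{n}$ I would write
\begin{align*}
\|K_{n}^{\prime}(\psi) - K_{n}^{\prime}(P_{n}\Psi)\|
&\leq \|K_{n}^{\prime}(\psi) - P_{n}K^{\prime}(\psi)\|
 + \|P_{n}K^{\prime}(\psi) - P_{n}K^{\prime}(P_{n}\Psi)\| \\
&\quad + \|P_{n}K^{\prime}(P_{n}\Psi) - K_{n}^{\prime}(P_{n}\Psi)\|,
\end{align*}
and bound each of the three terms by $\epsilon/3$ for a suitable choice of $\delta_{\epsilon}$ and $n_{\epsilon}$. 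Here all norms are operator norms over $E_{n}$, i.e.\ suprema over $\omega \in E_{n}$ with $\|\omega\|_{C(J;H^{1}_{0})} \leq 1$, consistent with the convention of Lemma \ref{LF} and Corollary \ref{LFG}.

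For the first term I would simply invoke Corollary \ref{LFG} with $\epsilon$ replaced by $\epsilon/3$: this produces $\delta^{(1)}_{\epsilon} > 0$ and $n^{(1)}_{\epsilon}$ with $\|[K_{n}^{\prime} - P_{n}K^{\prime}](\psi)\| \leq \epsilon/3$ whenever $\psi \in E_{n}$, $\|\psi - P_{n}\Psi\|_{C(J;H^{1}_{0})} \leq \delta^{(1)}_{\epsilon}$, and $n \geq n^{(1)}_{\epsilon}$. For the third term I would use Lemma \ref{LF} (relation (\ref{new7})): since $\|P_{n}K^{\prime}(P_{n}\Psi) - K_{n}^{\prime}(P_{n}\Psi)\| \to 0$, there is $n^{(3)}_{\epsilon}$ beyond which this quantity is $\leq \epsilon/3$. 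Neither term requires new work; this is precisely why Corollary \ref{LFG} was isolated.

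The middle term is the one where the local Lipschitz property of the derivative is used. Since $\|P_{n}\| = 1$, we have $\|P_{n}K^{\prime}(\psi) - P_{n}K^{\prime}(P_{n}\Psi)\| \leq \|K^{\prime}(\psi) - K^{\prime}(P_{n}\Psi)\|$ as operators on $C(J;H^{1}_{0})$. By Proposition \ref{prop4.2} we have $P_{n}\Psi \to \Psi$ in $C(J;H^{1}_{0})$, so there is $n^{(2)}_{\epsilon}$ such that $\|P_{n}\Psi - \Psi\|_{C(J;H^{1}_{0})} \leq 1$ for $n \geq n^{(2)}_{\epsilon}$; then, provided $\delta_{\epsilon} \leq 1$, both $\psi$ and $P_{n}\Psi$ lie in the closed ball of radius $2$ about $\Psi$, on which $K^{\prime}$ admits a single Lipschitz constant $L$ by Theorem \ref{global}. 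Hence the middle term is at most $L\|\psi - P_{n}\Psi\|_{C(J;H^{1}_{0})} \leq L\delta_{\epsilon}$, which is $\leq \epsilon/3$ once $\delta_{\epsilon} \leq \epsilon/(3L)$. Setting $\delta_{\epsilon} = \min\{\delta^{(1)}_{\epsilon},\, 1,\, \epsilon/(3L)\}$ and $n_{\epsilon} = \max\{n^{(1)}_{\epsilon}, n^{(2)}_{\epsilon}, n^{(3)}_{\epsilon}\}$ yields (\ref{new8}).

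The only delicate point is in the middle term: one must be sure that the Lipschitz neighborhood of $K^{\prime}$ is fixed independently of $n$, and this is exactly what the convergence $P_{n}\Psi \to \Psi$ of Proposition \ref{prop4.2} guarantees, so no genuine obstacle remains. (An alternative, more self-contained route is to telescope the explicit formula (\ref{defKnprime}) into three differences isolating the leading evolution operator $U^{\rho}_{\mathcal G}(t,s)$, the convolution multiplier $\mathrm{Re}(\bar\psi\omega)\ast W$, and the trailing evolution operator $U^{\rho}_{\mathcal G}(s,0)$, and to estimate each using Lemma \ref{uniform}, the inequality (\ref{LipV}), and the uniform boundedness of $U^{\rho}_{\mathcal G}$; the constants there are uniform in $n$ because the densities $|\psi|^{2}$, $|P_{n}\Psi|^{2}$ stay in a bounded set. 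The reduction above is shorter.)
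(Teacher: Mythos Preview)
Your proof is correct and follows essentially the same approach as the paper: the identical three-term telescoping through $P_{n}K^{\prime}$, with Corollary \ref{LFG} handling the first term, Lemma \ref{LF} the third, and the local Lipschitz property of $K^{\prime}$ from Theorem \ref{global} controlling the middle term. The only differences are cosmetic (you use a ball of radius $2$ about $\Psi$ where the paper uses radius $1$, and your bookkeeping of the auxiliary $\delta$'s and $n$'s is slightly more explicit).
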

\begin{proof}
We write, for $\psi$ in a neighborhood of $P_{n} \Psi$ in $E_{n}$ as yet
to be determined, 
$$
K_{n}^{\prime}(\psi) - K_{n}^{\prime}(P_{n} \Psi)
 = [K_{n}^{\prime}(\psi) - P_{n} K^{\prime}(\psi)] \;+
$$
$$
[P_{n} K^{\prime}(\psi) - P_{n} K^{\prime}(P_{n} \Psi)] \;+
[P_{n} K^{\prime}(P_{n} \Psi) - K^{\prime}_{n}(P_{n} \Psi)]. 
$$
To estimate the second  term, choose a Lipschitz constant $C$ such that
$K^{\prime}$ is Lipschitz continuous, with this constant, in the closed
ball of radius one centered at $\Psi$ in $C(J; H^{1}_{0})$. 
Choose $N$ sufficiently large so that, for $n \geq N$,
$P_{n} \Psi$ is in the concentric ball of radius $1/2$. 
Then select
$\delta_{\epsilon}^{\prime}$ as follows:
$$
\delta_{\epsilon}^{\prime} = \min \left(1/2, \; \frac{\epsilon}{3C} \right).
$$ 
We can choose $n_{\epsilon}$ and $\delta_{\epsilon}^{\prime \prime}$
so that $n_{\epsilon} \geq N$ and the
first and third terms do not exceed $\epsilon/3$ for 
$n \geq n_{\epsilon}$. 
This is possible by an application of Lemma \ref{LF} to the third
expression, and Corollary \ref{LFG} to the first expression. 
We conclude that if
$$
\delta_{\epsilon} := \min(\delta_{\epsilon}^{\prime}, 
\delta_{\epsilon}^{\prime \prime}),
$$
then
condition (\ref{new8}) is satisfied, and the proof is concluded.
\end{proof}
\subsection{The main result}
\begin{theorem}
\label{mainresult}
Suppose that $\Psi$ is the unique solution satisfying Definition
\ref{weaksolution} and guaranteed by Theorem \ref{EU}.
Suppose that the subspaces $E_{n} \subset C(J; H^{1}_{0})$ are defined as
in Definition \ref{projectionPn},
with associated projections $P_{n}$. 
Suppose that the regularization hypothesis of Definition
\ref{reghyp} holds.
There is a pair $\delta_{0}, n_{0}$,
such that, in the closed ball ${\overline B(\Psi, \delta_{0})} \subset
C(J; H^{1}_{0})$, and for $n \geq n_{0}$,  
there
is a unique solution of the (nonlinear) Faedo-Galerkin equation 
(\ref{step2linFGal}), with $\rho = |u|^{2}$. 
If we designate the solutions by $u = \Psi_{n}$, then
these approximations converge to the unique solution $\Psi$ with order
$O(\|\Psi - P_{n}\Psi\|_{C(J; H^{1}_{0}}) +
O(\|\Psi_{0} - Q_{n}\Psi_{0}\|_{H^{1}_{0}})$.
This is the maximal expected order of convergence.
\end{theorem}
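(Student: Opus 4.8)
The plan is to recognize Theorem \ref{mainresult} as a direct instance of the abstract Krasnosel'skii calculus of Theorem \ref{ther2}, under the dictionary $E = \Omega_{0} = C(J; H^{1}_{0})$, $T = K$ (Definition \ref{decomp1}), $T_{n} = K_{n}$ (Definition \ref{decomp2}), $x_{0} = \Psi$ the unique fixed point of $K$, and $P_{n}$ the module projections of Definition \ref{projectionPn}, for which $\Omega_{n} = E_{n}$. The argument is then a verification that every hypothesis of Theorem \ref{ther2} has already been established in the preceding sections, followed by a translation of the abstract conclusion back into the language of the nonlinear Faedo-Galerkin equation.

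First I would assemble the hypotheses. Differentiability of $K$ with locally Lipschitz derivative, and continuous invertibility of $I - K'(\Psi)$, are Theorem \ref{global}; differentiability of $P_{n}K$ follows because $P_{n}$ is a bounded linear projection (second Remark after Theorem \ref{ther2}, giving $(P_{n}K)' = P_{n}K'$), and continuous Fr\'{e}chet differentiability of $K_{n}$ is the Proposition following Lemma \ref{uniform}. The convergence conditions (\ref{projcon}) and (\ref{projTcon}) are Proposition \ref{prop4.2}; condition (\ref{firstder}) is the Proposition following Definition \ref{reghyp}, which is precisely where the regularization hypothesis enters; condition (\ref{projTncon}) is Lemma \ref{Tncon}; condition (\ref{secondder}) is Lemma \ref{LF}; and the uniform continuity condition (\ref{conderTn}) is Lemma \ref{LF2}. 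With all hypotheses in hand, Theorem \ref{ther2} produces a pair $\delta_{0}, n_{0}$ such that for $n \geq n_{0}$ the equation $K_{n} x_{n} = x_{n}$ has a unique solution $x_{n}$ in the ball $\|x - \Psi\| \leq \delta_{0}$ of $C(J; H^{1}_{0})$; the existence is delivered by the theorem itself and need not be argued independently.

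Next I would identify $x_{n}$ with $\Psi_{n}$. By the construction of $K_{n}$ in Definition \ref{decomp2} together with Theorem \ref{lacp}, the fixed point equation $K_{n}u = u$ states exactly that $u = U_{\mathcal G}^{\rho}(\cdot, 0) Q_{n}\Psi_{0}$ with $\rho = |u|^{2}$, i.e.\ that $u$ solves the nonlinear Faedo-Galerkin equation (\ref{step2linFGal}) with $\rho = |u|^{2}$; uniqueness of the fixed point in the ball transfers to uniqueness of $\Psi_{n}$ there. For the rate, I would feed the two-sided bound (\ref{xbnd}) through Remark \ref{remark2.1}: the upper estimate gives
\[
\|\Psi_{n} - P_{n}\Psi\|_{C(J;H^{1}_{0})} \le c_{2}\bigl(\|(I-P_{n})\Psi\| + \|K\Psi - KP_{n}\Psi\| + \|KP_{n}\Psi - K_{n}P_{n}\Psi\|\bigr),
\]
where the first term is the projection error, the second is $O(\|\Psi - P_{n}\Psi\|_{C(J;H^{1}_{0})})$ by the local Lipschitz property of $K$, and the third is $O(\|\Psi - P_{n}\Psi\|_{C(J;H^{1}_{0})}) + O(\|\Psi_{0} - Q_{n}\Psi_{0}\|_{H^{1}_{0}})$ by Proposition \ref{KnapproximatesK}. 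Combining with (\ref{xcon}) yields $\|\Psi_{n} - \Psi\|_{C(J;H^{1}_{0})} = O(\|\Psi - P_{n}\Psi\|_{C(J;H^{1}_{0})}) + O(\|\Psi_{0} - Q_{n}\Psi_{0}\|_{H^{1}_{0}})$, and the matching lower bound in (\ref{xbnd}) shows this order cannot be improved, which justifies the "maximal expected order" assertion.

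Since essentially all of the analytic labor resides in the cited results, the principal obstacle is one of bookkeeping: matching the abstract differentiability and uniform-continuity hypotheses of Theorem \ref{ther2} \emph{precisely} — in particular ensuring that condition (\ref{conderTn}) is supplied by Lemma \ref{LF2} (through Corollary \ref{LFG}) rather than by the merely pointwise statement (\ref{secondder}) — and confirming that the convergence orders quoted in Proposition \ref{KnapproximatesK} and Lemma \ref{Tncon} are genuinely uniform in $t$, so that they persist as $C(J; H^{1}_{0})$ estimates. One should also check that the ball radius $\delta_{0}$ returned by Theorem \ref{ther2} lies inside the neighborhoods of $P_{n}\Psi$ required by Lemma \ref{LF2} and Corollary \ref{LFG}, which is immediate after shrinking $\delta_{0}$ and enlarging $n_{0}$ if necessary.
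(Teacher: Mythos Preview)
Your proposal is correct and follows essentially the same route as the paper: verify the hypotheses of Theorem \ref{ther2} via the results of Sections \ref{hypI} and \ref{hypII}, then read off existence/uniqueness of $\Psi_{n}$ and use Remark \ref{remark2.1} together with Proposition \ref{KnapproximatesK} and the local Lipschitz property of $K$ to obtain the stated order. Your write-up is in fact more explicit than the paper's own proof in cataloguing which preceding result supplies each abstract hypothesis and in invoking the lower bound of (\ref{xbnd}) to justify the ``maximal expected order'' claim.
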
 
\begin{proof}
We have verified the hypotheses of Theorem \ref{ther2} in sections 
\ref{hypI} and \ref{hypII}. This yields the unique local existence of the
nonlinear Faedo-Galerkin approximations $\Psi_{n}$ for sufficiently large
$n$.  
According to Remark \ref{remark2.1}, we have
$$
\|P_{n}K \Psi - K_{n} P_{n} \Psi\| \leq
\|(P_{n} - I) K\Psi\| + \| K\Psi - KP_{n}\Psi\| +
\|(K - K_{n})P_{n}\Psi\|. 
$$
The first two of these expressions are of the order
$O(\|I - P_{n}\|)$ in $C(J; H^{1}_{0})$.
In fact, since $\Psi$ is a fixed point of $K$, the 
convergence of the first term on the rhs
is a consequence of (\ref{new3}). 
The convergence of the second term follows from the local Lipschitz
property of $K$, combined with (\ref{new3}).  
The convergence of the third term follows from Proposition 
\ref{KnapproximatesK}, and is
of order 
$O(\|\Psi - P_{n}\Psi\|_{C(J; H^{1}_{0}}) +
O(\|\Psi_{0} - Q_{n}\Psi_{0}\|_{H^{1}_{0}})$.
This completes the proof.
\end{proof}
\section{Summary}
We have employed a powerful and precise operator calculus to obtain a
sharp convergence theory for Faedo-Galerkin approximations for solutions
of time dependent closed quantum systems with Kohn-Sham potentials.
Computation has not been discussed in the article. This topic was discussed
in \cite{JP}, where it was shown that a spectral method, embodied in the
algorithm FEAST, can accommodate the evolution operator approach,
extending to time and spatial discretization. The evolution operator is
especially appropriate for a fixed point 
approximation theory, as used here, because it
permits explicit analytical calculation. 

Natural extensions of the present study include the incorporation of time
discretization, and additional potentials to account for
exchange-correlation. The latter is particularly important to track charge
exactly, and thus the physical properties which use exact charge.

Since the evolution operator accommodates convergent iteration,
it would be especially valuable if the iteration analyzed in
\cite{archive} could be extended to standard finite dimensional
approximation theory.

We are not aware of any other systematic approach to Faedo-Galerkin
approximation for this model. For a special choice of $F_{n}$, based on
a smooth orthonormal system, the authors of \cite{SCB} demonstrate
convergence, as part of an existence analysis for a quantum control model.

Finally, we make the following observation, which paraphrases that of
\cite{KrasVain}. Although six separate convergence estimates are required for
the application of the theory, it is only those cited in Remark 
\ref{remark2.1} which govern the rate of convergence, established in
Theorem \ref{mainresult}. If $\Psi$ and $\Psi_{0}$ have additional
regularity, one expects these estimates to confirm increased order of
convergence from standard approximation theory, when applicable. 

\appendix
\section{Notation and Norms}
We employ complex Hilbert spaces 
in this article. 
$$
L^{2}(\Omega) = \{f = (f_{1}, \dots, f_{N})^{T}: |f_{j}|^{2} \;
\mbox{is integrable on} \; \Omega \}.
$$
$$
(f,g)_{L^{2}}=\sum_{j=1}^{N}\int_{\Omega}f_{j}(x){\overline {g_{j}(x)}} \; dx.
$$
However, $\int_{\Omega} fg$ is interpreted as 
$$
\sum_{j=1}^{N} \int_{\Omega} f_{j} g_{j} \;dx.
$$
For $f \in L^{2}$, as just defined, if each component $f_{j}$ satisfies
$
f_{j} \in H^{1}_{0}(\Omega; {\mathbb C}), 
$
we write $f \in H^{1}_{0}(\Omega; {\mathbb C}^{N})$, or simply, 
$f \in H^{1}_{0}(\Omega)$.
The inner product in $H^{1}_{0}$ is 
$$
(f,g)_{H^{1}_{0}}=
(f,g)_{L^{2}}+\sum_{j=1}^{N}\int_{\Omega} 
\nabla f_{j}(x) \cdotp {\overline {\nabla g_{j}(x)}} \; dx.
$$
$\int_{\Omega} \nabla f \cdotp \nabla g$ is interpreted as
$$
\sum_{j=1}^{N}\int_{\Omega} 
\nabla f_{j}(x) \cdotp \nabla g_{j}(x) \; dx.
$$
Finally, $H^{-1}$ is defined as the dual of $H^{1}_{0}$, and its
properties are discussed at length in \cite{Adams}. 
The Banach space $C(J; H^{1}_{0})$ is defined in the traditional manner:
$$
C(J; H^{1}_{0}) = \{u:J \mapsto H^{1}_{0}: u(\cdotp) \mbox{is
continuous}\}, 
$$
$$
\|u\|_{C(J; H^{1}_{0}} = \sup_{t \in J} \|u(t)\|_{H^{1}_{0}}.
$$


\end{document}